\theoremstyle{plain}
\newtheorem{corollary}{\bf Corollary}
\newtheorem{lemma}{\bf Lemma}
\newtheorem{remark}{\bf Remark}
\newtheorem{theorem}{\bf Theorem}
\numberwithin{equation}{section}
\begin{document}

\title[A note on Serrin's type problem on Riemannian manifolds
]{A note on Serrin's type problem on Riemannian manifolds}

\author[Allan Freitas]{Allan Freitas}
\address{Departamento de Matemática, Universidade Federal da Paraíba, Cidade Universitária, 58051-900, João Pessoa, Paraíba, Brazil.}
\email{allan@mat.ufpb.br, allan.freitas@academico.ufpb.br}

\author[Alberto Roncoroni]{Alberto Roncoroni}
\address{Dipartimento di Matematica, Politecnico di Milano, Piazza Leonardo da Vinci 32, 20133, Milano, Italy.}
\email{alberto.roncoroni@polimi.it.}

\author[Márcio Santos]{Márcio Santos}
\address{Departamento de Matemática, Universidade Federal da Paraíba, Cidade Universitária, 58051-900, João Pessoa, Paraíba, Brazil.}
\email{marcio.santos@academico.ufpb.br}


\keywords{Overdetermined PDE; Conformal fields; Rigidity}
\subjclass[2020]{Primary 35R01, 35N25, 53C24; Secondary 35B50, 58J05, 58J32.}

\begin{abstract}In this paper, we deal with Serrin-type problems in Riemannian manifolds. First, we obtain a Heintze-Karcher inequality and a Soap Bubble result, with its respective rigidity, when the ambient space has a Ricci tensor bounded below. After, we approach a Serrin problem in bounded domains of manifolds endowed with a closed conformal vector field. Our primary tool, in this case, is a new Pohozaev identity, which depends on the scalar curvature of the manifold. Applications involve Einstein and constant scalar curvature spaces.
\end{abstract}
\maketitle

\section{Introduction}

In \cite{serrin}, inspired by a question proposed by Fosdick of a problem related to fluids dynamics, Serrin has begun to study the rigidity of the following Poisson equation with boundary constraints:
\begin{eqnarray}
\Delta u  &=& -1 \ \ \hbox{in}\quad \Omega, \quad u=0\quad \hbox{on}\quad  \partial\Omega, \label{serrinok1}\\
u_{\nu}&=&-c\quad \hbox{on}\quad  \partial\Omega,\label{serrinok2}    
\end{eqnarray}
where $\Omega\subset\mathbb{R}^n$ is a bounded domain, $\nu$ is the outward unit normal, $u_\nu$ is the normal derivative of $u$ on $\partial\Omega$ and $c$ is a positive constant. This underlying work proved that the problem \eqref{serrinok1}-\eqref{serrinok2} admits a solution if and only if $\Omega$ is a ball and $u$ is a radial function. The technique used to solve this problem relies on the so-called \emph{method of moving planes}. We refer to the original paper \cite{serrin} for further details. In the same edition of the above-mentioned Serrin's work, Weinberger \cite{wein} provided a simpler proof based on integral identities; this method is nowadays known as the {\it{$P$-function approach}}. In short, Weinberger defines a sub-harmonic function $P(u)$ associated with the solution of \eqref{serrinok1}-\eqref{serrinok2} and applies the classical strong maximum principle to prove that this function is constant, which, in terms, implies the rigidity result. One main step to prove this constancy is the classical Pohozaev identity for Euclidean domains (we refer to Remark \ref{remark2} below and to the paper \cite{NT} for further details). After Serrin and Weinberger's papers in the literature, there are several alternative proofs and generalizations of Serrin's result in $\mathbb{R}^n$ (see, e.g., \cite{BC, BNST, CGS, CH, CS, FJ, FK, FV, FGK, GL, Payne, SS, Sperb} and the references therein). 

It is no coincidence that these two methods to prove the rigidity of the problem \eqref{serrinok1}-\eqref{serrinok2}, i.e., the moving planes method and the integral technique, remember two classical proofs for the {\it Alexandrov's Soap Bubble Theorem} that claims the only compact embedded constant mean curvature hypersurface in the Euclidean space is a sphere. Moreover, as shown in \cite{poggesi} (see also Corollary \ref{corolario4} below), there is an interesting connection between the mean curvature of $\partial\Omega$ and $u_{\nu}$, where $u$ solves \eqref{serrinok1}-\eqref{serrinok2}.

In particular, in the proof by Ros in \cite{ros} of Alexandrov's theorem via integral inequalities, one crucial step is the \emph{Heintze-Karcher inequality} that reads as follows: given an $n-$dimensional Riemannian manifold $(M,g)$ with non-negative Ricci curvature and given a bounded domain $\Omega\subset M$ such that the mean curvature $H$ of $\partial\Omega$ is positive, then  
\begin{equation}\label{HK_Ros}
 \frac{n-1}{n}\int_{\partial\Omega}\frac{1}{H}\geq Vol(\Omega).
\end{equation}
Moreover, the equality holds in \eqref{HK_Ros} if and only if $\Omega$ is isometric to the Euclidean ball. The proof of \eqref{HK_Ros} uses the Reilly identity proved in \cite{reilly} (see also \eqref{reilly} below), applied to the solution of \eqref{serrinok1}. A natural question is whether or not the Heintze-Karcher inequality can be extended in the class of $n-$dimensional Riemannian manifolds $(M,g)$ satisfying the following lower bound on the Ricci curvature
\begin{equation}\label{Ricci}
   Ric\geq (n-1)kg \,, \quad \text{for some $k\in\mathbb{R}$}.
\end{equation}
By considering a (positive) solution to the following problem 
\begin{eqnarray}\label{serrinok3}
\Delta u+nku  &=& -1 \ \ \hbox{in}\quad \Omega, \quad u=0\quad \hbox{on}\quad  \partial\Omega, 
\end{eqnarray}
where $\Omega\subset M$ is a bounded domain, we obtain our first result 

\begin{theorem}\label{teoA}
    Let $(M,g)$ be an $n-$dimensional Riemannian manifold satisfying \eqref{Ricci}. Let $\Omega\subset M$ be a bounded domain such that the mean curvature of $\partial\Omega$ is positive and $u$ be a positive solution of \eqref{serrinok3}. Then
\begin{equation*}
 \frac{n-1}{n}\int_{\partial\Omega}\frac{1}{H}\geq Vol(\Omega)+nk\int_{\Omega}u, 
\end{equation*}
and the equality occurs if and only if $\Omega$ is a geodesic ball and $u$ is a radial function. 
\end{theorem}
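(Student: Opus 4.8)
The plan is to apply the Reilly identity \eqref{reilly} to the positive solution $u$ of \eqref{serrinok3} and to exploit the Dirichlet condition $u=0$ on $\partial\Omega$. Since $u$ vanishes on the boundary, its tangential gradient along $\partial\Omega$ vanishes as well, so every boundary term in the Reilly identity involving the boundary Laplacian and the second fundamental form drops out, leaving only
\begin{equation*}
\int_{\partial\Omega}H\,u_\nu^2=\int_\Omega\Big[(\Delta u)^2-|\nabla^2 u|^2-Ric(\nabla u,\nabla u)\Big].
\end{equation*}
I would then estimate the integrand by two pointwise inequalities: the refined Cauchy--Schwarz (Kato) bound $|\nabla^2 u|^2\ge \tfrac{1}{n}(\Delta u)^2$, and the Ricci hypothesis \eqref{Ricci}, which gives $Ric(\nabla u,\nabla u)\ge(n-1)k\,|\nabla u|^2$. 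Combining these yields
\begin{equation*}
\int_{\partial\Omega}H\,u_\nu^2\le \int_\Omega\Big[\tfrac{n-1}{n}(\Delta u)^2-(n-1)k\,|\nabla u|^2\Big].
\end{equation*}

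The next step is to evaluate the right-hand side using the equation $\Delta u=-(1+nku)$ together with integration by parts. Since $u=0$ on $\partial\Omega$ one has $\int_\Omega|\nabla u|^2=-\int_\Omega u\,\Delta u=\int_\Omega u+nk\int_\Omega u^2$, while $\int_\Omega(\Delta u)^2=Vol(\Omega)+2nk\int_\Omega u+n^2k^2\int_\Omega u^2$. Substituting these, the terms in $\int_\Omega u^2$ cancel exactly and the bound collapses to the clean form
\begin{equation*}
\int_{\partial\Omega}H\,u_\nu^2\le \frac{n-1}{n}\Big(Vol(\Omega)+nk\int_\Omega u\Big).
\end{equation*}
Independently, the divergence theorem applied to \eqref{serrinok3} gives $\int_{\partial\Omega}(-u_\nu)=-\int_\Omega\Delta u=Vol(\Omega)+nk\int_\Omega u$, and by the Hopf lemma $u_\nu<0$ on $\partial\Omega$, so this quantity, which I denote by $A$, is strictly positive.

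To close the argument I would apply the Cauchy--Schwarz inequality on $\partial\Omega$ to the factorization $-u_\nu=\big(\sqrt H\,(-u_\nu)\big)\cdot H^{-1/2}$, obtaining
\begin{equation*}
A^2=\Big(\int_{\partial\Omega}(-u_\nu)\Big)^2\le \Big(\int_{\partial\Omega}H\,u_\nu^2\Big)\Big(\int_{\partial\Omega}\frac{1}{H}\Big)\le \frac{n-1}{n}\,A\int_{\partial\Omega}\frac{1}{H}.
\end{equation*}
Dividing by $A>0$ gives precisely $Vol(\Omega)+nk\int_\Omega u\le \tfrac{n-1}{n}\int_{\partial\Omega}\tfrac1H$, the desired inequality.

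The main obstacle will be the rigidity statement. Equality forces simultaneous equality in all three estimates: the Kato inequality becomes $\nabla^2 u=\tfrac{\Delta u}{n}\,g=-\tfrac{1+nku}{n}\,g$, the Ricci estimate saturates along $\nabla u$, and equality in Cauchy--Schwarz gives $-u_\nu=c/H$ on $\partial\Omega$ for a constant $c$. The identity $\nabla^2 u=-\tfrac{1+nku}{n}\,g$ is the decisive one: it shows that $|\nabla u|$ is constant on the level sets of $u$, that these level sets are totally umbilical, and, via an Obata/Reilly-type argument, that the metric on $\Omega$ is rotationally symmetric about the critical point of $u$. I expect the delicate points to be ensuring the level sets foliate $\Omega$ without singularities (handling the critical set of $u$) and matching the resulting warped-product structure against the saturated Ricci bound so as to conclude that $\Omega$ is a geodesic ball and $u$ a radial function.
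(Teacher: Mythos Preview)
Your proposal is correct and follows essentially the same route as the paper: Reilly's identity applied to the solution of \eqref{serrinok3}, the Ricci lower bound together with the pointwise Hessian inequality $|\nabla^2 u|^2\ge\tfrac1n(\Delta u)^2$, and a boundary estimate yielding the Heintze--Karcher inequality; the only technical variation is that you close with Cauchy--Schwarz on $\partial\Omega$ (exactly as in Ros' original argument), whereas the paper instead completes the square $\tfrac{1}{n^2H}\big[(n-1)+nHu_\nu\big]^2$ to package everything into the single exact defect identity \eqref{reillyapplied}. As for your worry about the rigidity step, the paper does not carry out the level-set/Obata argument either---it simply invokes \cite[Lemma~6]{farina-roncoroni}, which converts $\mathring{\nabla}^2 u=0$ directly into the conclusion that $\Omega$ is a geodesic ball and $u$ is radial, so you may cite the same result.
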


We mention that when $k=0$, we recover the classical Heintze-Karcher inequality \eqref{HK_Ros}. The essential tool here is Reilly's identity applied to the solution of \eqref{serrinok3} and to perform the analog of Ros' proof. In particular, as a Corollary of this Theorem, we obtain an Alexandrov Soap Bubble-type result (see Theorem \ref{soapbubble} below).

Talking about Weiberger's $P$-function approach, the two ingredients associated with his technique, that is, a maximum principle for a suitable $P$-function and a Pohozaev-type identity, have been reproduced to study overdetermined problems for domains in Riemannian manifolds. In particular, by studying Serrin's problem on simply connected manifolds with constant sectional curvature, the space forms, Ciraolo and Vezzoni \cite{ciraolo} used the $P$-function approach to prove the analog of Serrin's theorem for the problem \eqref{serrinok3} with the overdetermined condition
\begin{eqnarray}
u_{\nu}&=&-c\quad \hbox{on}\quad  \partial\Omega,\label{serrinok4}    
\end{eqnarray}
where $\Omega\subset M$ is a bounded domain and $M=\mathbb{R}^n$ when $k=0$, $M=\mathbb{H}^n$ when $k=-1$ and $M=\mathbb{S}^n_+$ when $k=1$. We also refer to \cite{kumpraj} for the same result via the method of moving planes. Following the line in \cite{ciraolo}, Farina and the second author \cite{farina-roncoroni} (see also \cite{Roncoroni} for a previous partial result) have studied this same problem on warped products, also obtaining rigidity results by considering such ambients with Ricci curvature bounded below. 
 On the other hand, it is known that Serrin’s theorem is, in general false; indeed, given a compact Riemannian manifold $(M,g)$ such that there exists a non-degenerate critical point $p\in M$ of its scalar curvature function, then it is possible to construct a smooth foliation $(\partial\Omega_{\varepsilon})$ of a neighborhood of $p$, where $\Omega_{\varepsilon}$ is a domain in which \eqref{serrinok1}-\eqref{serrinok2} possesses a solution with $c=\frac{\varepsilon}{n}$ (we refer to \cite{fall}, and to \cite{pacard} and \cite{delay} for related results for the eigenvalue problem). Moreover, we mention that the existence results for overdetermined problems for other semilinear equations in compact Riemannian manifolds have been proved in \cite{dv1} and \cite{dv2}. 


In this work, we deal with Serrin-type problems on Riemannian manifolds, providing geometric conditions on the ambient space to obtain rigidity results. In particular, we study problem \eqref{serrinok3}-\eqref{serrinok4} in a class of Riemannian manifolds satisfying \eqref{Ricci} and that admit a closed conformal vector field. A vector field $X\in\mathfrak{X}(M)$ is called \textit{closed conformal} if
\begin{equation*}
\nabla_Y X=\varphi Y,   \quad \text{ for all $Y\in\mathfrak{X}(M),$} 
\end{equation*}
for some smooth function $\varphi$ called the \emph{conformal factor}. We mention that manifolds endowed with a nontrivial closed conformal vector field are locally isometric to a warped product with a 1-dimensional factor (for details, we refer, e.g., to ~\cite[Section~3]{montiel}). In this sense, the results of this paper extend and could be compared with those previously obtained in \cite{farina-roncoroni}. Despite this, we provide new geometric identities and conclusions (even in the case of warped products) that permit their applications in Einstein and constant scalar curvature ambient spaces. For such a class, we obtain a Pohozaev identity that we think it posses its own interest (see Lemma \ref{lemmadif} below). This identity will be combined with the $P$-function approach to obtain geometric constraints that imply rigidity (see Theorem \ref{mainthm1} below). Some interesting applications are noticed when the ambient is, e.g., an Einstein manifold.

\begin{theorem}\label{teoB}
Let $(M,g)$ be an Einstein manifold with $ Ric= (n-1)kg$, for some $k\in \mathbb{R}$ such that $M$ is endowed with a closed conformal field with a positive conformal factor $\varphi$. Let $u$ be a positive solution of the problem \eqref{serrinok3}-\eqref{serrinok4}, then $\Omega$ is a metric ball, and $u$ is a radial function.    
\end{theorem}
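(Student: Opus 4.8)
The plan is to run Weinberger's $P$-function scheme, using the new Pohozaev identity of Lemma~\ref{lemmadif} in place of the Euclidean one, and to exploit the Einstein condition to make the two ingredients dovetail. First I would introduce the auxiliary function
\[
P = |\nabla u|^2 + \frac{2}{n}\,u + k\,u^2 .
\]
A Bochner computation, using $\Delta u = -1-nku$ from \eqref{serrinok3} (so that $\nabla\Delta u = -nk\,\nabla u$) together with the Einstein identity $Ric(\nabla u,\nabla u) = (n-1)k\,|\nabla u|^2$, gives $\Delta|\nabla u|^2 = 2|\nabla^2u|^2 - 2k|\nabla u|^2$. Substituting this and $\Delta(u^2)=2u\,\Delta u+2|\nabla u|^2$, the zeroth-, first- and second-order terms in $u$ cancel exactly because of the choice of the coefficients $\tfrac2n$ and $k$, and one is left with
\[
\Delta P = 2\left|\nabla^2 u - \frac{\Delta u}{n}\,g\right|^2 \ \geq 0 ,
\]
so $P$ is subharmonic. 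The coefficient $k$ in front of $u^2$ is precisely what annihilates the curvature contributions; this is where the Einstein hypothesis enters first.

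Next I would read off the boundary data. Since $u=0$ on $\partial\Omega$ and $\nabla u = u_\nu\,\nu = -c\,\nu$ by \eqref{serrinok4}, the two lower-order terms of $P$ vanish on $\partial\Omega$ while $|\nabla u|^2 = c^2$ there, hence $P\equiv c^2$ on $\partial\Omega$; the weak maximum principle then yields $P\le c^2$ throughout $\Omega$. To upgrade this to an equality I would invoke the Pohozaev identity of Lemma~\ref{lemmadif} applied to the closed conformal field $X$ (for which $\dv X = n\varphi$). Testing \eqref{serrinok3} against $\langle X,\nabla u\rangle$ and using $u=0$ on $\partial\Omega$ produces an identity whose boundary term reduces, via $\nabla u=-c\,\nu$, to $\tfrac12 c^2\int_{\partial\Omega}\langle X,\nu\rangle=\tfrac n2\,c^2\int_\Omega\varphi$. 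On the Einstein manifold the conformal factor satisfies $\nabla\varphi = -kX$ (a consequence of $R(Z,Y)X=(Z\varphi)Y-(Y\varphi)Z$ together with $Ric=(n-1)k\,g$), and this lets me evaluate $\int_\Omega\varphi\,|\nabla u|^2$ in closed form in terms of $\int_\Omega\varphi\,u$ and $\int_\Omega\varphi\,u^2$. Combining the two computations, the scalar-curvature terms organize themselves so that
\[
\int_\Omega \varphi\,P = c^2\int_\Omega\varphi .
\]
Since $\varphi>0$ and $P\le c^2$, the nonnegative integrand $\varphi\,(c^2-P)$ has vanishing integral, forcing $P\equiv c^2$ in $\Omega$.

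Finally, $P\equiv c^2$ gives $\Delta P\equiv 0$, hence $\nabla^2 u = \frac{\Delta u}{n}\,g = -\frac{1+nku}{n}\,g$; that is, $u$ solves an Obata--Tashiro equation $\nabla^2 u = \rho(u)\,g$. From here $\nabla|\nabla u|^2 = 2\rho(u)\nabla u$ is parallel to $\nabla u$, so $|\nabla u|$ is constant on the level sets of $u$, these level sets are totally umbilical, the gradient flow lines are geodesics, and $u$ is a function of the distance to its unique interior critical point (the maximum, which exists since $u>0$ and $u=0$ on $\partial\Omega$); consequently $\Omega=\{u>0\}$ is a geodesic (metric) ball and $u$ is radial, as claimed. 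I expect the main obstacle to be the middle step: extracting the exact form of the Pohozaev identity for a closed conformal field and verifying that, after the substitution $\nabla\varphi=-kX$, the curvature contributions recombine precisely into $\int_\Omega\varphi\,P = c^2\int_\Omega\varphi$ rather than leaving an uncontrolled remainder. The positivity of $\varphi$ is essential to pass from the vanishing integral to $P\equiv c^2$, and the positivity of $u$ is what fixes the geometry in the concluding rigidity argument.
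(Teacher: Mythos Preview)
Your proposal is correct and follows essentially the same route as the paper: the paper proves Theorem~\ref{teoB} as an immediate corollary of Theorem~\ref{mainthm1}, whose proof is precisely the $P$-function argument you outline (subharmonicity of $P$ from Lemma~\ref{farina-roncoroni}, the Pohozaev identity of Lemma~\ref{lemmadif} to rule out $P<c^2$, then the Obata-type rigidity of \cite[Lemma~6]{farina-roncoroni}). Your presentation is in fact slightly cleaner: you obtain the exact identity $\int_\Omega\varphi\,P=c^2\int_\Omega\varphi$ in the Einstein case (which indeed follows from combining \eqref{A} and \eqref{B} with $R=n(n-1)k$, $X(R)=0$), whereas the paper argues by contradiction via the strong maximum principle and the inequality \eqref{main1}; but the mechanism is the same.
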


Finally, another important tool in Ros' proof of the Alexandrov Soap bubble Theorem (besides the already mentioned Heintze-Karcher inequality) is the \emph{Minkowski identity} (see e.g. \cite{reilly2}): 

\begin{equation*}\label{Mink}
\int_{\partial\Omega} H\langle x-p, \nu\rangle dx= (n-1)|\partial\Omega|, 
\end{equation*}
where $p\in\mathbb{R}^n$ and  $\Omega$ is a bounded domain in $\mathbb{R}^n.$ Motivated by this we have the following result

\begin{theorem}\label{teoC}
Let $(M,g)$ be a manifold endowed with a closed conformal vector field $X$ and constant scalar curvature $R=n(n-1)k.$ Suppose that $u$ is a positive solution of the problem \eqref{serrinok3}-\eqref{serrinok4}. Then, $$\int_{\partial\Omega}\langle X,\nu\rangle ((n-1)-cnH)=0,$$ 
where $H$ denotes the mean curvature of $\partial\Omega.$ In particular, if $k=0$, $\langle X,\nu\rangle>0$ and  $\partial\Omega $ has constant mean curvature, then $H=\frac{(n-1)}{n}\frac{|\partial\Omega|}{|\Omega|}.$    
\end{theorem}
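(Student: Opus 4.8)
The plan is to reduce the stated identity to a single scalar relation between a bulk integral and a boundary integral of the conformal factor, namely $\int_{\Omega}\varphi = c\int_{\partial\Omega}\varphi$, and then to establish this relation by showing that $\varphi$ itself solves the homogeneous version of \eqref{serrinok3}. Two preliminary ingredients are needed. First, since $\dv X = \tr(\nabla X) = n\varphi$, the divergence theorem gives $\int_{\partial\Omega}\langle X,\nu\rangle = n\int_{\Omega}\varphi$. Second, I would prove a Minkowski-type identity for the closed conformal field $X$, namely $\int_{\partial\Omega}H\langle X,\nu\rangle = (n-1)\int_{\partial\Omega}\varphi$, where $H$ is taken as the sum of the principal curvatures (the normalization implicit in the Euclidean identity quoted above, where $\varphi\equiv 1$ gives $(n-1)|\partial\Omega|$). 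Granting these, expanding $\int_{\partial\Omega}\langle X,\nu\rangle\big((n-1)-cnH\big)$ produces $n(n-1)\big(\int_{\Omega}\varphi - c\int_{\partial\Omega}\varphi\big)$, so the theorem is equivalent to $\int_{\Omega}\varphi = c\int_{\partial\Omega}\varphi$.

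For the Minkowski-type identity I would work on $\Sigma=\partial\Omega$ and decompose $X = X^{T} + \langle X,\nu\rangle\nu$ along $\Sigma$. For a local orthonormal tangent frame $\{e_i\}$, the defining relation $\nabla_{e_i}X = \varphi\, e_i$ has tangential part $\nabla^{\Sigma}_{e_i}X^{T} + \langle X,\nu\rangle\,\nabla_{e_i}\nu = \varphi\, e_i$; tracing over $i$ yields $\dv_{\Sigma}X^{T} + \langle X,\nu\rangle H = (n-1)\varphi$ pointwise on $\Sigma$. Integrating over the closed hypersurface $\partial\Omega$ annihilates the intrinsic divergence term and gives the asserted identity.

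The heart of the argument is the observation that $\varphi$ solves $\Delta\varphi + nk\varphi = 0$. To obtain this I would use the curvature identity for a closed conformal field, $R(Y,Z)X = (Y\varphi)Z - (Z\varphi)Y$, which upon contraction gives $\mathrm{Ric}(X,\cdot) = -(n-1)\,d\varphi$, i.e. $\nabla\varphi = -\tfrac{1}{n-1}\mathrm{Ric}(X)^{\sharp}$. Taking the divergence and using $\nabla^{i}X^{j} = \varphi\, g^{ij}$ together with the contracted second Bianchi identity $\dv\,\mathrm{Ric} = \tfrac12\nabla R$ (which vanishes since $R$ is constant), I get $\Delta\varphi = -\tfrac{R}{n-1}\varphi = -nk\varphi$. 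Now Green's second identity applied to the pair $(u,\varphi)$ gives $\int_{\Omega}(\varphi\Delta u - u\Delta\varphi) = \int_{\partial\Omega}(\varphi\, u_\nu - u\,\varphi_\nu)$; the left integrand collapses since $\varphi\Delta u - u\Delta\varphi = \varphi(-1-nku) - u(-nk\varphi) = -\varphi$, while the boundary term reduces to $-c\int_{\partial\Omega}\varphi$ because $u=0$ and $u_\nu=-c$ on $\partial\Omega$. This is precisely $\int_{\Omega}\varphi = c\int_{\partial\Omega}\varphi$, proving the main identity. I expect the main obstacle to be exactly this step: correctly deriving $\Delta\varphi + nk\varphi = 0$ from the constant scalar curvature hypothesis, getting the curvature contractions and signs right, since this is what makes Green's identity close the loop.

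Finally, for the last assertion, when $k=0$ the equation is $\Delta u = -1$, and integrating gives $-c|\partial\Omega| = \int_{\partial\Omega}u_\nu = \int_{\Omega}\Delta u = -|\Omega|$, hence $c = |\Omega|/|\partial\Omega|$. If in addition $H$ is constant, then $(n-1)-cnH$ is a constant and factors out of the main identity; since $\langle X,\nu\rangle>0$ forces $\int_{\partial\Omega}\langle X,\nu\rangle>0$, we conclude $(n-1)-cnH=0$, that is $H=\tfrac{n-1}{cn}=\tfrac{n-1}{n}\tfrac{|\partial\Omega|}{|\Omega|}$.
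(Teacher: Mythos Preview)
Your proof is correct and follows essentially the same approach as the paper: both arguments hinge on the identity $\Delta\varphi+nk\varphi=0$ (derived from $\mathrm{Ric}(X,\cdot)=-(n-1)\,d\varphi$ and the constancy of $R$), combine it with Green's formula for the pair $(u,\varphi)$ to obtain $\int_\Omega\varphi=c\int_{\partial\Omega}\varphi$, and then convert this into the stated boundary identity via the Minkowski-type relation $\mathrm{div}_\Sigma X^T+H\langle X,\nu\rangle=(n-1)\varphi$. The only difference is the order of presentation---you reduce first and verify after, while the paper proceeds in the reverse order.
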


\medskip 

\textbf{Organization of the paper.} In Section \ref{HK}, we prove Theorem \ref{teoA} and apply it to establish an Alexandrov Soap Bubble Theorem. In Section \ref{section3}, we consider the class of Riemannian manifolds that admits a closed conformal vector field; we prove a Pohozaev-type identity and obtain a general integral condition that implies the rigidity, giving some applications (like Theorem \ref{teoB} and \ref{teoC}).

\section{Heintze-Karcher inequality and Soap Bubble Theorem}\label{HK}

We begin this section remembering the well-known \emph{Reilly identity}, proved in \cite{reilly}, 

\begin{multline}
\int_{\Omega}^{} \Big[ \frac{n-1}{n}(\Delta f)^2 - | \mathring{\nabla}^2f |^2 \Big] \; \label{reilly}\\      = \int_{\partial \Omega}^{} \Big( h(\bar{\nabla} z, \bar{\nabla} z) + 2 f_{\nu} \bar{\Delta} z + H f_{\nu}^2\Big)  + \int_{\Omega}^{} Ric(\nabla f, \nabla f) ,
\end{multline}     
which holds true for every domain $\Omega$ in a Riemannian manifold $(M^n,g)$ and for every $f \in C^{\infty}(\overline{\Omega})$, where $\mathring{\nabla}^2 f$ denotes the traceless Hessian of $f$, explicitly
$$
\mathring{\nabla}^2 f=\nabla^2 f - \frac{\Delta f}{n} g\, ,
$$
$\bar{\nabla}$ and $\bar{\Delta}$ indicate the gradient and the Laplacian of the induced metric in $\partial\Omega$, $z=f\vert_{\partial\Omega}$ 
and $\nu$ be the unit outward normal of $\partial\Omega$, $h(X,Y)= g(\nabla_{X}\nu, Y)$ and $H=tr_{g}h$ the second fundamental form and the mean curvature (with respect to $\nu$) of $\partial\Omega$, respectively. 

In this section, we consider the following problem
\begin{equation}\label{serrin2}
\left\{\begin{array}{rcl}
\Delta u +nku &=& -1\quad \hbox{in}\quad  \Omega\\
u&>& 0 \\
u&=& 0\quad \hbox{on}\quad  \partial\Omega,\\
\end{array}\right.
\end{equation}
where $k\in\mathbb{R}$ and the main purpose is to obtain the Heintze-Karcher type inequality announced in Theorem \ref{teoA}. The first step is to apply the Reilly identity \eqref{reilly} to the solution of \eqref{serrin2}
\begin{lemma} \label{lemmareilly}
Let $u$ be a solution of \eqref{serrin2}. Then   
\begin{equation}\label{serrin3}  \int_{\Omega}{| \mathring{\nabla}^2 u}|^2+\int_{\Omega}[\mbox{Ric}-(n-1)kg](\nabla u,\nabla u)=-\frac{1}{n}\int_{\partial \Omega}u_{\nu}[(n-1)+nHu_{\nu}]
\end{equation}
\begin{proof}
Reilly's identity applied to the solution of \eqref{serrin2} implies
\begin{equation}\label{eq1}
 \int_{\Omega}|\mathring{\nabla}^2 u|^2=-\int_{\partial\Omega}Hu_{\nu}^2-\int_{\Omega}\mbox{Ric}(\nabla u,\nabla u)+\frac{n-1}{n}\int_{\Omega}(\Delta u)^2.   
\end{equation}
On the other hand, again using \eqref{serrin2},
\begin{eqnarray}
\int_{\Omega}(\Delta u)^2&=&\int_{\Omega}\Delta u (-1-nku)\label{eq2}\\
&=&-\int_{\partial\Omega}u_{\nu}-nk\int_{\Omega}u\Delta u\nonumber\\
&=&-\int_{\partial\Omega}u_{\nu}+nk\int_{\Omega}|\nabla u|^2.\nonumber
\end{eqnarray}
Replacing \eqref{eq2} in \eqref{eq1}, the result follows.
\end{proof}
\end{lemma}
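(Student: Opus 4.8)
The plan is to apply the Reilly identity \eqref{reilly} to $f=u$, where $u$ solves \eqref{serrin2}, and then to feed in the structure of the equation $\Delta u=-1-nku$ together with the Dirichlet condition $u=0$ on $\partial\Omega$. The first simplification comes from the boundary data: since the trace $z=u\vert_{\partial\Omega}$ vanishes identically, both tangential terms $h(\bar\nabla z,\bar\nabla z)$ and $2f_\nu\bar\Delta z$ in the boundary integral of \eqref{reilly} drop out, leaving only $\int_{\partial\Omega}Hu_\nu^2$. Solving the resulting identity for $\int_\Omega|\mathring{\nabla}^2 u|^2$ gives the intermediate relation
\[
\int_\Omega|\mathring{\nabla}^2 u|^2 = \frac{n-1}{n}\int_\Omega(\Delta u)^2-\int_{\partial\Omega}Hu_\nu^2-\int_\Omega \mbox{Ric}(\nabla u,\nabla u).
\]

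The next step is to evaluate $\int_\Omega(\Delta u)^2$ from the equation. Replacing one factor $\Delta u$ by $-1-nku$ yields $\int_\Omega(\Delta u)^2=-\int_\Omega\Delta u-nk\int_\Omega u\,\Delta u$, and I would dispatch the two integrals by the divergence theorem: the first gives $\int_\Omega\Delta u=\int_{\partial\Omega}u_\nu$, while integrating the second by parts and using $u=0$ on $\partial\Omega$ produces $\int_\Omega u\,\Delta u=-\int_\Omega|\nabla u|^2$. Hence
\[
\int_\Omega(\Delta u)^2=-\int_{\partial\Omega}u_\nu+nk\int_\Omega|\nabla u|^2.
\]

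Substituting this into the intermediate relation and regrouping then finishes the argument. The only point needing care is the curvature bookkeeping: the prefactor $\frac{n-1}{n}$ multiplying $nk\int_\Omega|\nabla u|^2$ collapses to exactly $(n-1)k\int_\Omega|\nabla u|^2=(n-1)k\int_\Omega g(\nabla u,\nabla u)$, which combines with $\int_\Omega \mbox{Ric}(\nabla u,\nabla u)$ into the single term $\int_\Omega[\mbox{Ric}-(n-1)kg](\nabla u,\nabla u)$ on the left-hand side. The leftover boundary contributions $-\frac{n-1}{n}\int_{\partial\Omega}u_\nu$ and $-\int_{\partial\Omega}Hu_\nu^2$ factor as $-\frac1n\int_{\partial\Omega}u_\nu[(n-1)+nHu_\nu]$, which is precisely \eqref{serrin3}. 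There is no genuine obstacle here, as every step is a direct use of Reilly's identity, the divergence theorem, and the equation itself; the value of the computation lies exactly in arranging the $k$-dependent terms so that the curvature combination $\mbox{Ric}-(n-1)kg$ appears, which is what later allows the hypothesis \eqref{Ricci} to be exploited in the Heintze--Karcher estimate of Theorem \ref{teoA}.
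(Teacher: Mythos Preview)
Your proof is correct and follows exactly the same route as the paper: apply Reilly's identity with the Dirichlet condition killing the tangential boundary terms, then compute $\int_\Omega(\Delta u)^2$ via the equation and integration by parts, and finally regroup so that the $(n-1)k\int_\Omega|\nabla u|^2$ term merges with the Ricci term. If anything, you spell out the final substitution and factoring more explicitly than the paper does.
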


An immediate consequence of the previous formula is the following

\begin{corollary}\label{meanc}
Let $(M^n,g)$ a manifold with $\mbox{Ric}\geq (n-1)kg$, for some $k\in\mathbb{R}$. Let $\Omega\subset M$ be a domain and $u$ a solution of \eqref{serrin2}. If 
\begin{equation}\label{overdet}
u_{\nu}(x)=-\frac{n-1}{nH(x)} \quad \text{on } \partial\Omega,
\end{equation}
then $\Omega$ is a ball and $u$ is a radial function.
\end{corollary}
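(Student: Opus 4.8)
The plan is to start from the identity in Lemma \ref{lemmareilly}, namely
\begin{equation*}
\int_{\Omega}|\mathring{\nabla}^2 u|^2+\int_{\Omega}[\mathrm{Ric}-(n-1)kg](\nabla u,\nabla u)=-\frac{1}{n}\int_{\partial\Omega}u_{\nu}\big[(n-1)+nHu_{\nu}\big],
\end{equation*}
and to analyze both sides under the hypotheses. The left-hand side is a sum of two manifestly nonnegative terms: the traceless Hessian integral is nonnegative by definition, and the second integral is nonnegative because the Ricci lower bound $\mathrm{Ric}\geq(n-1)kg$ makes the integrand a nonnegative quadratic form. Hence the left-hand side is $\geq 0$, which forces
\begin{equation*}
-\frac{1}{n}\int_{\partial\Omega}u_{\nu}\big[(n-1)+nHu_{\nu}\big]\geq 0.
\end{equation*}

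The key observation is that the overdetermined condition \eqref{overdet}, $u_{\nu}=-\frac{n-1}{nH}$, is precisely the value that makes the boundary integrand vanish pointwise: substituting gives $(n-1)+nHu_{\nu}=(n-1)+nH\cdot\big(-\frac{n-1}{nH}\big)=0$, so the entire right-hand side equals zero. Therefore I would conclude that the nonnegative left-hand side is in fact zero, which means each of its two nonnegative summands vanishes: $\int_{\Omega}|\mathring{\nabla}^2 u|^2=0$ and $\int_{\Omega}[\mathrm{Ric}-(n-1)kg](\nabla u,\nabla u)=0$. In particular $\mathring{\nabla}^2 u\equiv 0$ on $\Omega$, so the Hessian of $u$ is pure trace, $\nabla^2 u=\frac{\Delta u}{n}g=-\frac{1+nku}{n}g$. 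This umbilicity-type condition on the level sets of $u$, together with $u=0$ on $\partial\Omega$, is the standard rigidity input: a function whose Hessian is a multiple of the metric has spherical level sets and radial structure, forcing $\Omega$ to be a geodesic ball and $u$ radial. I would invoke the equality-case analysis already established in Theorem \ref{teoA} (whose equality characterization rests on exactly this vanishing of $\mathring{\nabla}^2 u$) to close the argument, or alternatively cite the classical Obata-type/Reilly rigidity that $\nabla^2 u\propto g$ yields a warped-product (rotationally symmetric) structure.

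The main obstacle is the final geometric rigidity step: deducing from $\mathring{\nabla}^2 u=0$ that $\Omega$ is genuinely a geodesic ball rather than merely that $u$ has symmetric local behavior. One must verify that the level sets of $u$ are totally umbilical geodesic spheres, that $\nabla u$ does not vanish except at a single interior maximum point (so the foliation is well-defined), and that the induced metric assembles into the round model; this is where one typically appeals to a structure theorem (e.g. the argument underlying the equality case of Ros' Heintze-Karcher inequality, reproduced in the proof of Theorem \ref{teoA}) rather than reproving it. Since the inequality of Theorem \ref{teoA} together with its equality characterization is available, the cleanest route is to observe that condition \eqref{overdet} forces equality in the Heintze-Karcher inequality of Theorem \ref{teoA}, and then invoke that theorem's rigidity statement directly. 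I would therefore present the proof in two short moves: first show \eqref{overdet} is equivalent to equality in Theorem \ref{teoA} (using the Cauchy–Schwarz/AM–GM step that produces the Heintze-Karcher bound from \eqref{serrin3}), and then quote the equality case to conclude.
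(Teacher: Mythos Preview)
Your argument is correct and matches the paper's proof: plug \eqref{overdet} into the identity of Lemma~\ref{lemmareilly} so that the boundary integral vanishes pointwise, use $\mathrm{Ric}\geq(n-1)kg$ to see the left-hand side is a sum of nonnegative terms, and conclude $\mathring{\nabla}^2 u\equiv 0$. For the final rigidity step the paper simply cites the Obata-type result \cite[Lemma~6]{farina-roncoroni} (this is the same lemma recorded later as Lemma~\ref{farina-roncoroni}); your alternative of routing through the equality case of Theorem~\ref{teoA} is logically fine but is a forward reference in the paper's order and ultimately rests on the very same Obata-type lemma, so the direct citation is cleaner.
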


\begin{proof}
From \eqref{overdet} and the condition on the Ricci curvature, we immediately get, from Lemma \ref{lemmareilly}, that 
$$
\mathring{\nabla}^2 u=0 \quad \text{in $\Omega$}.
$$ 
The conclusion follows immediately from the Obata-type theorem in ~\cite[Lemma~6]{farina-roncoroni}.
\end{proof}

To our understanding, we note that the converse statement of the previous Corollary also holds true in the case of space forms. Indeed, the metric ball posses a radial solution which naturally satisfies $(2.6)$ (for details see ~\cite[Theorem~4]{farina-roncoroni} and, about the relation between the normal derivative and the mean curvature, see Corollary \ref{corolario4}.

\begin{remark}
The technique introduced in this section, what is 
a rearrangement of a Reilly(-type) formula applied to a solution of a PDE can be extended to study similar problems. In this sense, for example, the last result can be related (and indicate directions to extension) to a recent work associated with Serrin's problem to the $p$-Laplacian (see ~\cite[Theorem~1.6]{ruan}).   
\end{remark}

Another consequence of Lemma \ref{lemmareilly} is Theorem \ref{teoA}. 

\begin{proof}[Proof of Theorem \ref{teoA}]
Since
 \begin{eqnarray*}
     \frac{1}{nH}[(n-1)+nHu_{\nu}]^2&=&\frac{(n-1)^2}{nH}+u_{\nu}[(n-1)+nHu_{\nu}]+u_{\nu}(n-1),
 \end{eqnarray*}   
we have
\begin{eqnarray*}
-\frac{1}{n}\int_{\partial \Omega}u_{\nu}[(n-1)+nHu_{\nu}]&=&-\int_{\partial\Omega}\frac{1}{n^2 H}[(n-1)+nHu_{\nu}]^2\\
& &+\left(\frac{n-1}{n}\right)^2\int_{\partial\Omega}\frac{1}{H}+\frac{n-1}{n}\int_{\partial\Omega}u_{\nu}.    
\end{eqnarray*}
On the other hand,
\begin{eqnarray*}
\int_{\partial\Omega}u_{\nu}&=&\int_{\Omega}\Delta u\\
 &=&-Vol(\Omega)-nk\int_{\Omega}u.
\end{eqnarray*}
Then, by Lemma \ref{lemmareilly},
\begin{eqnarray}
& &
 \int_{\Omega}|\mathring{\nabla}^2 u|^2+\int_{\Omega}[\mbox{Ric}-(n-1)kg](\nabla u,\nabla u)+\frac{1}{n^2}\int_{\partial\Omega}\frac{1}{H}[(n-1)+nHu_{\nu}]^2\nonumber\\
 &=&\left(\frac{n-1}{n}\right)^2\int_{\partial\Omega}\frac{1}{H}-\frac{n-1}{n}\mbox{Vol}(\Omega)-(n-1)k\int_{\Omega}u.\label{reillyapplied}
\end{eqnarray}
Since the left-hand side of \eqref{reillyapplied} is non-negative, we obtain the desired inequality. Furthermore, the equality holds if and only if all integrals in the left-hand side of \eqref{reillyapplied} vanish (since each integral is non-negative). In particular, this implies $\mathring{\nabla}^2 u=0$; therefore, $\Omega$ is a geodesic ball, and $u$ is a radial function. 
\end{proof}

The last result that we prove in this section is an integral identity inspired by the ones in \cite{poggesi} and is the following: 

\begin{theorem} [Soap Bubble Theorem]\label{soapbubble}
Let $\Omega$ be a domain in a Riemannian manifold $(M^n,g)$ with $\mbox{Ric}\geq (n-1)kg$, for some $k\in\mathbb{R}$, and $u$ be a solution of \eqref{serrin2}.
Then 
 \begin{equation*}
\int_{\partial\Omega}(H_{0}-H)(u_{\nu})^2\geq 0, 
 \end{equation*}
 where $H$ is the mean curvature of $\partial\Omega$ and $H_{0}=\frac{n-1}{n c}$ with $c$ a constant given by \eqref{choiceR}. In particular, if $H\geq H_{0}$ on $\partial\Omega$ then $\Omega$ is a ball (and, a fortiori, $u_{\nu}=-c$).
\end{theorem}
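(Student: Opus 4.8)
The plan is to read the inequality straight off the Reilly-type identity \eqref{serrin3} of Lemma \ref{lemmareilly} once the constant $c$ has been normalized correctly. First I would record the sign information that comes for free: since $u>0$ in $\Omega$ and $u=0$ on $\partial\Omega$, the Hopf boundary lemma gives $u_\nu<0$ everywhere on $\partial\Omega$, so that $\int_{\partial\Omega}(-u_\nu)>0$ and $\int_{\partial\Omega}u_\nu^2>0$. This is exactly what makes the normalization in \eqref{choiceR} legitimate; I would fix $c$ (hence $H_0=1/c$) by
\begin{equation}\label{choiceR}
\frac{1}{c}\int_{\partial\Omega}u_\nu^2=-\frac{n-1}{n}\int_{\partial\Omega}u_\nu,
\end{equation}
whose right-hand side is strictly positive, so $c>0$ is well defined.

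Next I would use the curvature hypothesis $\mbox{Ric}\geq(n-1)kg$: it makes $\int_{\Omega}[\mbox{Ric}-(n-1)kg](\nabla u,\nabla u)\geq0$, while $\int_{\Omega}|\mathring{\nabla}^2u|^2\geq0$ trivially, so the whole left-hand side of \eqref{serrin3} is nonnegative and therefore $-\tfrac1n\int_{\partial\Omega}u_\nu[(n-1)+nHu_\nu]\geq0$. The remaining step is purely algebraic: writing $u_\nu[(n-1)+nHu_\nu]=(n-1)u_\nu+nHu_\nu^2$ and inserting the defining relation \eqref{choiceR} yields
\[
\int_{\partial\Omega}(H_0-H)u_\nu^2=\frac1c\int_{\partial\Omega}u_\nu^2-\int_{\partial\Omega}Hu_\nu^2=-\frac{n-1}{n}\int_{\partial\Omega}u_\nu-\int_{\partial\Omega}Hu_\nu^2=-\frac1n\int_{\partial\Omega}u_\nu\big[(n-1)+nHu_\nu\big],
\]
which is precisely the (nonnegative) right-hand side of \eqref{serrin3}. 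This gives the asserted inequality $\int_{\partial\Omega}(H_0-H)u_\nu^2\geq0$ with no further work.

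For the rigidity I would argue through the equality case. If $H\geq H_0$ on $\partial\Omega$, then $(H_0-H)u_\nu^2\leq0$ pointwise, so the integral is $\leq0$; together with the inequality just proved it must vanish. Tracing this back through \eqref{serrin3}, both nonnegative bulk terms are forced to vanish, in particular $\mathring{\nabla}^2u\equiv0$ in $\Omega$. At this point the Obata-type rigidity of \cite[Lemma~6]{farina-roncoroni} (the same tool used in Corollary \ref{meanc}) forces $\Omega$ to be a geodesic ball with $u$ radial; on such a ball $\mathring{\nabla}^2u=0$ makes $u_\nu$ constant along $\partial\Omega$, which is the overdetermined condition $u_\nu=-c$ recorded in the statement.

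The only genuinely delicate point is the bookkeeping of the constant: one must choose $c$ in \eqref{choiceR} as the exact weighted normalization (so that the boundary integral \emph{equals}, rather than merely bounds, the right-hand side of \eqref{serrin3}); any coarser choice would leave unrecoverable slack coming from the factor $\tfrac{n-1}{n}$ and would render the threshold $H_0$ either vacuous or non-sharp. Everything else reduces to the Hopf sign $u_\nu<0$, the Ricci sign of the bulk term, and the identity already established in Lemma \ref{lemmareilly}.
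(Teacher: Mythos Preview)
Your argument for the integral inequality is clean and correct, but the constant you introduce is \emph{not} the paper's \eqref{choiceR}. You normalize $c$ by
\[
\frac{1}{c}\int_{\partial\Omega}u_\nu^2=-\frac{n-1}{n}\int_{\partial\Omega}u_\nu,
\]
so that $\int_{\partial\Omega}(H_0-H)u_\nu^2$ becomes \emph{exactly} the right-hand side of \eqref{serrin3}. The paper proceeds differently: it first completes the square,
\[
H_0\int_{\partial\Omega}u_\nu^2=\frac{1}{c}\int_{\partial\Omega}(u_\nu+c)^2-2\int_{\partial\Omega}u_\nu-c|\partial\Omega|,
\]
and then chooses
\[
c=-\frac{n+1}{n|\partial\Omega|}\int_{\partial\Omega}u_\nu=\frac{n+1}{n|\partial\Omega|}\Bigl(Vol(\Omega)+nk\int_\Omega u\Bigr)
\]
to cancel the residual constant terms. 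The outcome is an identity with a \emph{third} nonnegative defect,
\[
\int_{\partial\Omega}(H_0-H)u_\nu^2=\int_{\Omega}|\mathring{\nabla}^2u|^2+\int_{\Omega}[\mathrm{Ric}-(n-1)kg](\nabla u,\nabla u)+\frac{1}{c}\int_{\partial\Omega}(u_\nu+c)^2.
\]
What the paper's route buys: the constant is explicit (depending only on $|\partial\Omega|$, $Vol(\Omega)$, $\int_\Omega u$, not on $\int_{\partial\Omega}u_\nu^2$), and in the equality analysis the vanishing of $\int_{\partial\Omega}(u_\nu+c)^2$ forces $u_\nu\equiv-c$ \emph{directly}, with no appeal to radial symmetry. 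What your route buys: a shorter derivation with only the two bulk defects.

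There is one genuine slip in your rigidity paragraph. You conclude ``$u_\nu$ constant on $\partial\Omega$'' from the ball structure and then assert this constant is $-c$. With \emph{your} normalization that is false: if $u_\nu\equiv-a$, your defining relation gives $a^2/c=\tfrac{n-1}{n}a$, i.e.\ $c=\tfrac{n}{n-1}a$, so $u_\nu=-\tfrac{n-1}{n}c$, not $-c$. The paper's completing-the-square term is precisely what pins down $u_\nu=-c$ in the equality case; your simpler normalization loses that piece of information.
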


\begin{proof}
   We start from \eqref{serrin3} and we analyse its right-hand side: 
\begin{eqnarray}
& & -\frac{1}{n}\int_{\partial \Omega}u_{\nu}[(n-1)+nHu_{\nu}]=\nonumber\\
&=&\frac{n-1}{n}\int_{\partial\Omega}u_{\nu}+\frac{(n-1)c}{n}|\partial\Omega|-\frac{1}{c}\int_{\partial\Omega}\left(u_{\nu}+c\right)^2+\int_{\partial\Omega}(H_{0}-H)u_{\nu}^2, \label{reilly3}   
\end{eqnarray}
where we used the following trivial identity
\begin{eqnarray*}
\int_{\partial\Omega}Hu_{\nu}^2&=&H_{0}\int_{\partial\Omega}u_{\nu}^2+\int_{\partial\Omega}(H-H_{0})u_{\nu}^2\\
 &=&\frac{n-1}{nc}\int_{\partial\Omega}\left(u_{\nu}+c\right)^2-2\frac{n-1}{n}\int_{\partial\Omega}u_{\nu}-\frac{(n-1)c}{n}|\partial\Omega|+\int_{\partial\Omega}(H-H_{0})u_{\nu}^2  .
\end{eqnarray*}
We choose 
\begin{eqnarray}
 c&:=&-\frac{\int_{\partial\Omega}u_{\nu}}{|\partial\Omega|}\nonumber\\
 &=&\frac{1}{|\partial\Omega|}\left(Vol(\Omega)+nk\int_{\Omega}u\right)\label{choiceR}
\end{eqnarray}

\end{proof}

\section{Serrin type problem in an ambient endowed with a closed conformal vector field} \label{section3}

Given a Riemannian manifold $(M,g)$, we recall that a vector field $X\in\mathfrak{X}(M)$ is called \textit{closed conformal} if
\begin{equation}\label{conformal}
\nabla_Y X=\varphi Y,    
\end{equation}
for all vector field $Y\in\mathfrak{X}(M),$ where $\varphi$ is a smooth function called conformal factor. Along this section, $M$ denotes a manifold endowed with a closed conformal vector field $X\in \mathfrak{X}(M)$, and $\varphi$ is its conformal factor.

Our first result is the following Bochner-type identity.

\begin{lemma} Let $f$ be a smooth function on $M$. Then,
$$\Delta\langle X,\nabla f\rangle=\langle\nabla\varphi,\nabla f\rangle(2-n)+2\varphi\Delta f+\langle\nabla\Delta f,X\rangle,$$
where $X$ is the closed conformal field with conformal factor $\varphi$.
\end{lemma}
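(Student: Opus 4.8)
The plan is to compute the gradient of $w:=\langle X,\nabla f\rangle$ first and then take its divergence. Differentiating $w$ in an arbitrary direction $Y$ and using the defining relation $\nabla_Y X=\varphi Y$ gives
\[
Y(w)=\langle \nabla_Y X,\nabla f\rangle+\langle X,\nabla_Y\nabla f\rangle=\varphi\,\langle Y,\nabla f\rangle+\nabla^2 f(X,Y),
\]
so that $\nabla w=\varphi\,\nabla f+\nabla_X\nabla f$. Taking the divergence splits the Laplacian into two pieces, $\Delta w=\operatorname{div}(\varphi\,\nabla f)+\operatorname{div}(\nabla_X\nabla f)$. The first piece is elementary: $\operatorname{div}(\varphi\,\nabla f)=\langle\nabla\varphi,\nabla f\rangle+\varphi\,\Delta f$.

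For the second piece I would work in a local orthonormal frame $\{e_i\}$, geodesic at the point of interest, and expand $\operatorname{div}(\nabla_X\nabla f)$ in components. The two key facts are the relation $\nabla_i X^j=\varphi\,\delta_i^j$, which extracts one factor of $\varphi\,\Delta f$, and the standard Ricci (Bochner) commutation identity $\operatorname{div}(\nabla^2 f)=\nabla\Delta f+\operatorname{Ric}(\nabla f,\cdot)$, which handles the term in which the derivative falls on the Hessian. Combining them yields
\[
\operatorname{div}(\nabla_X\nabla f)=\varphi\,\Delta f+\langle X,\nabla\Delta f\rangle+\operatorname{Ric}(X,\nabla f).
\]
Collecting both pieces gives $\Delta w=\langle\nabla\varphi,\nabla f\rangle+2\varphi\,\Delta f+\langle X,\nabla\Delta f\rangle+\operatorname{Ric}(X,\nabla f)$, which already agrees with the target save for the residual Ricci term and the coefficient of $\langle\nabla\varphi,\nabla f\rangle$.

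The final, genuinely essential step is to absorb the Ricci term using the geometry of the closed conformal field. From $\nabla_Y X=\varphi Y$ a direct computation of the curvature tensor produces $R(Y,Z)X=Y(\varphi)\,Z-Z(\varphi)\,Y$; tracing this against the frame gives the identity $\operatorname{Ric}(X,\cdot)=-(n-1)\,\nabla\varphi$. Substituting $\operatorname{Ric}(X,\nabla f)=-(n-1)\langle\nabla\varphi,\nabla f\rangle$ turns $\langle\nabla\varphi,\nabla f\rangle+\operatorname{Ric}(X,\nabla f)$ into $(2-n)\langle\nabla\varphi,\nabla f\rangle$, which is exactly the claimed identity.

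I expect the main obstacle to be the covariant-derivative bookkeeping in the second piece — getting the Ricci term out with the correct sign — together with the curvature computation for $X$. Everything else is routine. It is worth noting that no curvature hypothesis on $M$ enters: the Ricci contribution is entirely accounted for by the closed conformal structure through $\operatorname{Ric}(X,\cdot)=-(n-1)\,\nabla\varphi$, which is what makes the final formula curvature-free.
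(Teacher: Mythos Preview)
Your proposal is correct and follows essentially the same route as the paper: compute $\nabla\langle X,\nabla f\rangle=\varphi\nabla f+\nabla^2 f(X)$, take the divergence using the Ricci identity $\operatorname{div}(\nabla^2 f)=\nabla\Delta f+\operatorname{Ric}(\nabla f,\cdot)$, and then eliminate the Ricci term via the curvature computation $\operatorname{Ric}(X,\cdot)=-(n-1)\nabla\varphi$. The only cosmetic difference is that the paper packages the step $\nabla_i X^j\,\nabla_i\nabla_j f=\varphi\,\Delta f$ as $\tfrac12\langle\nabla^2 f,\mathcal{L}_X g\rangle$ with $\mathcal{L}_X g=2\varphi g$, whereas you extract it directly in a geodesic frame.
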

\begin{proof}
First, a straightforward calculation shows that 
\begin{equation}\label{eqlll}
 \nabla \langle X,\nabla f\rangle=\varphi\nabla f+\nabla^2  f(X).   
\end{equation}

We observe that 
\begin{eqnarray*}
 div (\nabla^2 f(X))
 &=&\frac{1}{2}\langle \nabla^2 f, \mathcal{L}_Xg\rangle+\langle\nabla\Delta f,X\rangle +Ric(\nabla f,X),
\end{eqnarray*}
where $\mathcal{L}$ denotes the Lie derivative, and we have used Ricci's identity $div(\nabla^2 f)=\nabla\Delta f+Ric(\nabla f)$. Then, by taking the divergence of \eqref{eqlll},
\begin{equation}\label{laplacian}
    \Delta \langle X,\nabla f\rangle= \langle\nabla\varphi,\nabla f\rangle+\varphi\Delta f+\frac{1}{2}\langle \nabla^2 f,\mathcal{L}_Xg\rangle +\langle\nabla\Delta f,X\rangle +Ric(\nabla f,X)
\end{equation}
On other hand, since \eqref{conformal} the curvature tensor
\begin{eqnarray*}
 R(u,v)X
 &=&\langle u,\nabla \varphi\rangle v-\langle v,\nabla \varphi\rangle u,
\end{eqnarray*}
for all $u,v\in\mathfrak{X}(M)$. Then $Ric(X,\cdot)=-(n-1)\nabla\varphi$. Furthermore, again by \eqref{conformal}, $\mathcal{L}_Xg=2\varphi g.$ Therefore, replacing in \eqref{laplacian} we get the desired result.

\end{proof}

We now devote our attention to the following overdetermined problem
\begin{equation}\label{serrin}
\left\{\begin{array}{rcl}
\Delta u +nku &=& -1 \quad \hbox{in}\quad \Omega\\
u&>&  0\\
u&=& 0\quad \hbox{on}\quad  \partial\Omega,\\
|\nabla u|&=&c\quad \hbox{on}\quad  \partial\Omega,
\end{array}\right.
\end{equation}
where $\Omega$ is a bounded domain in $(M,g)$.

The following result provides a Pohozaev-type identity for such domains. 
\begin{lemma}\label{lemmadif}
    Let $M$ be a manifold endowed with a closed conformal vector field $X.$ Let $u$ be a solution of the problem \eqref{serrin}, then
\begin{equation}\label{Pohozaev}
   \frac{n+2}{n}\int_{\Omega}\varphi u=c^2\int_{\Omega}\varphi-\frac{n-2}{2n(n-1)}\int_{\Omega}u^2(\varphi R+\frac{1}{2}X(R))-2k\int_{\Omega}\varphi u^2 ,
\end{equation}
    where $R$ is the scalar curvature of $M.$
\end{lemma}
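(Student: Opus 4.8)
The plan is to run the classical Pohozaev/Rellich argument with the closed conformal field $X$ playing the role of the Euclidean position vector field: I would test the equation $\Delta u+nku=-1$ against the multiplier $\langle X,\nabla u\rangle$ and integrate over $\Omega$. Before doing so, the crucial point is to isolate the mechanism by which the scalar curvature enters. I claim that
\[
\varphi R+\tfrac12 X(R)=-(n-1)\Delta\varphi .
\]
Granting this, the scalar-curvature term appearing in \eqref{Pohozaev} is nothing but $-(n-1)\int_{\Omega}u^2\Delta\varphi$, so the entire identity can be rephrased purely in terms of $\varphi$, $u$ and their derivatives, after which it reduces to careful bookkeeping of integrations by parts.

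To prove the curvature identity I would start from the two facts already established in the proof of the previous lemma, namely $\nabla X=\varphi g$ (equivalently $\operatorname{div}X=n\varphi$) and $Ric(X,\cdot)=-(n-1)\nabla\varphi$. Taking the divergence of the $1$-form $Ric(X,\cdot)$ and using the contracted second Bianchi identity $\operatorname{div}Ric=\tfrac12\nabla R$, together with $\langle Ric,\nabla X\rangle=\varphi\,\tr Ric=\varphi R$, gives $\operatorname{div}\big(Ric(X,\cdot)^{\sharp}\big)=\tfrac12 X(R)+\varphi R$; on the other hand $Ric(X,\cdot)^{\sharp}=-(n-1)\nabla\varphi$ yields $\operatorname{div}\big(Ric(X,\cdot)^{\sharp}\big)=-(n-1)\Delta\varphi$, and the identity follows by comparison.

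For the main computation I would expand $\int_{\Omega}\langle X,\nabla u\rangle(\Delta u+nku)$ term by term. From $\operatorname{div}(uX)=\langle X,\nabla u\rangle+n\varphi u$ and $u=0$ on $\partial\Omega$ one gets $\int_{\Omega}\langle X,\nabla u\rangle=-n\int_{\Omega}\varphi u$, and similarly $\int_{\Omega}u\langle X,\nabla u\rangle=\tfrac12\int_{\Omega}\langle X,\nabla u^2\rangle=-\tfrac{n}{2}\int_{\Omega}\varphi u^2$. The delicate term is $\int_{\Omega}\langle X,\nabla u\rangle\Delta u$: integrating by parts and inserting \eqref{eqlll}, i.e.\ $\nabla\langle X,\nabla u\rangle=\varphi\nabla u+\nabla^2u(X)$, together with $\nabla^2u(X,\nabla u)=\tfrac12\langle X,\nabla|\nabla u|^2\rangle$, produces the interior contribution $\big(\tfrac{n}{2}-1\big)\int_{\Omega}\varphi|\nabla u|^2$ and boundary terms which, because $u=0$ and $|\nabla u|=c$ force $\nabla u=u_{\nu}\nu$ with $u_{\nu}^2=c^2$ on $\partial\Omega$, collapse to $\tfrac{c^2}{2}\int_{\partial\Omega}\langle X,\nu\rangle=\tfrac{nc^2}{2}\int_{\Omega}\varphi$ after one more use of the divergence theorem.

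Finally I would eliminate $\int_{\Omega}\varphi|\nabla u|^2$ by a second integration by parts: writing it as $-\int_{\Omega}u\,\operatorname{div}(\varphi\nabla u)$ and using $\Delta u=-1-nku$ gives $\int_{\Omega}\varphi|\nabla u|^2=\tfrac12\int_{\Omega}u^2\Delta\varphi+\int_{\Omega}\varphi u+nk\int_{\Omega}\varphi u^2$. Substituting this into the assembled Pohozaev relation, collecting the coefficients of $\int_{\Omega}\varphi u$ and $\int_{\Omega}\varphi u^2$, and then replacing $\int_{\Omega}u^2\Delta\varphi$ by the scalar-curvature expression via the curvature identity, yields \eqref{Pohozaev} after multiplying through by $2/n$. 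I expect the only genuinely conceptual step to be the curvature identity of the second paragraph; everything else is a careful tracking of boundary contributions, where the overdetermined conditions $u=0$ and $|\nabla u|=c$ are used precisely to make the boundary integrals proportional to $\int_{\partial\Omega}\langle X,\nu\rangle$.
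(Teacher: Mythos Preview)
Your argument is correct and leads to \eqref{Pohozaev}; the curvature identity $\varphi R+\tfrac12 X(R)=-(n-1)\Delta\varphi$ is exactly the key step the paper also isolates (their \eqref{trick2}), and your derivation of it is the same as theirs.

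Where you differ from the paper is in the organization of the main integration. The paper does not test $\Delta u$ directly against the multiplier $\langle X,\nabla u\rangle$; instead it invokes the Bochner-type formula of the preceding lemma for $\Delta\langle X,\nabla u\rangle$, forms the combination $u\,\Delta\langle X,\nabla u\rangle-\langle X,\nabla u\rangle\,\Delta u$, and recognizes it as a pure divergence (Green's second identity). That route reaches the intermediate relation \eqref{prepohozaev} without ever encountering $\int_\Omega\varphi|\nabla u|^2$, so there is no elimination step. Your route is the classical Rellich--Pohozaev pattern: one integration by parts on $\int_\Omega\langle X,\nabla u\rangle\Delta u$ produces $(\tfrac{n}{2}-1)\int_\Omega\varphi|\nabla u|^2$, and a second integration by parts removes it. Both computations are of comparable length; the paper's is slightly slicker because it recycles the previous lemma, while yours is more self-contained and does not rely on that formula. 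Either way the final substitution of $\Delta\varphi$ via the curvature identity is identical.
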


\begin{proof}
    If $u$ is a solution of the Serrin problem \eqref{serrin}, Lemma \ref{lemmadif} gives
\begin{equation*}
\Delta \langle X,\nabla u\rangle= \langle\nabla\varphi,\nabla u\rangle(2-n)+2\varphi(-1-nku)-nk\langle X,\nabla u\rangle.
\end{equation*}
Thus, multiplying this identity by $u$ and using again \eqref{serrin}, we conclude that 
\begin{equation}\label{trick}
    u\Delta \langle X,\nabla u\rangle-\langle X,\nabla u\rangle\Delta u=u\langle\nabla\varphi,\nabla u\rangle(2-n)-2\varphi u-2n\varphi k u^2+\langle X,\nabla u\rangle.
\end{equation}
Now, note that $u\Delta \langle X,\nabla u\rangle-\langle X,\nabla u\rangle\Delta u=div(u\nabla \langle X,\nabla u\rangle- \langle X,\nabla u\rangle\nabla u).$ Since $u=0$ along of the boundary and $\nu=-\frac{\nabla u}{|\nabla u|}$, we conclude from divergence theorem 

\begin{equation*}
    \int_{\Omega}u\Delta \langle X,\nabla u\rangle-\langle X,\nabla u\rangle\Delta u=-\int_{\partial\Omega}\langle X,\nabla u\rangle\langle\nabla u,\nu\rangle=c\int_{\partial\Omega}\langle X,\nabla u\rangle
\end{equation*}
Again using the divergence theorem $$\int_{\partial\Omega}\langle X,\nabla u\rangle=-c\int_{\Omega}div X=-cn\int_{\Omega}\varphi.$$
Thus, 
\begin{equation}\label{eqqqq1}
 \int_{\Omega}u\Delta \langle X,\nabla u\rangle-\langle X,\nabla u\rangle\Delta u=-c^2n\int_{\Omega}\varphi. 
\end{equation} 
We intend to study the integral of the right side of \eqref{trick}. First, since $u=0$ on $\partial\Omega$ and $div X=n\varphi$, we have 

\begin{equation}\label{a}
 \int_{\Omega}\langle X,\nabla u\rangle=-\int_{\Omega}udiv X=-n\int_{\Omega}u\varphi.
\end{equation}
Moreover, using again that $u=0$ on $\partial\Omega$,

\begin{equation}\label{b}
    \int_{\Omega}u\langle\nabla \varphi,\nabla u\rangle=-\frac{1}{2}\int_{\Omega}u^2\Delta\varphi.
\end{equation}
 From \eqref{trick}, \eqref{eqqqq1}, \eqref{a}, and \eqref{b} we get that

\begin{equation}\label{prepohozaev}
    \frac{n+2}{n}\int_{\Omega}\varphi u=c^2\int_{\Omega}\varphi+\frac{n-2}{2n}\int_{\Omega}u^2\Delta\varphi-2k\int_{\Omega}\varphi u^2
\end{equation}

Finally, since $Ric(X)=-(n-1)\nabla\varphi,$ we conclude that
\begin{eqnarray}\label{trick2}
 -(n-1)\Delta\varphi&=& div(Ric(X))\\\nonumber
    &=& \frac{1}{2}\langle Ric, \mathcal{L}_Xg\rangle+div(Ric)(X)\\\nonumber
    &=&\varphi R+\frac{1}{2}X(R),
    \end{eqnarray}
where $R$ denotes the scalar curvature of $\Omega.$
From \eqref{prepohozaev} and \eqref{trick2} we get the desired result.

\end{proof}


Now, we consider the following $P$-function
\begin{equation}\label{pfunction2}
P(u)=|\nabla u|^2+\frac{2}{n}u+ku^2.  \end{equation}
If $u$ is a solution of \eqref{serrin}, the Bochner's formula applied to $\nabla u$ can be used to prove (under a condition of Ricci bounded below) the following sub-harmonicity for $P(u)$ as well as a rigidity result (see Farina-Roncoroni~\cite[Lemma~5 and Lemma~6]{farina-roncoroni})
\begin{lemma}[\cite{farina-roncoroni}]\label{farina-roncoroni}
Let $(M^{n},g)$ be an $n$-dimensional Riemannian manifold such that  
$$\mbox{Ric}\geq (n-1)kg.$$
Let $\Omega\subset M$ be a domain and $u\in C^{2}(\Omega)$ a solution of \eqref{serrin} in $\Omega$. Then 
$$\Delta P(u)\geq 0.$$
Furthermore, $\Delta P(u)=0$ if and only if $\Omega$ is a metric ball and $u$ is a radial function.
\end{lemma}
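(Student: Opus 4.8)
The plan is to compute $\Delta P(u)$ directly by applying the Bochner formula to $\nabla u$ and then to exploit the equation $\Delta u = -1 - nku$ to annihilate every lower-order term. The specific coefficients $\tfrac{2}{n}$ and $k$ appearing in the definition \eqref{pfunction2} of $P(u)$ are engineered precisely so that this cancellation occurs, and recognizing this is the conceptual heart of the argument.

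First I would write down the Bochner formula
\[
\tfrac{1}{2}\Delta|\nabla u|^2 = |\nabla^2 u|^2 + \langle \nabla u, \nabla \Delta u\rangle + \mathrm{Ric}(\nabla u, \nabla u),
\]
and then handle the two remaining pieces of $\Delta P(u)$ by hand, namely $\tfrac{2}{n}\Delta u$ and $k\,\Delta(u^2) = k\bigl(2u\,\Delta u + 2|\nabla u|^2\bigr)$. Since $\Delta u = -1 - nku$, one has $\nabla \Delta u = -nk\,\nabla u$, so that $\langle \nabla u, \nabla \Delta u\rangle = -nk\,|\nabla u|^2$, and every term becomes explicit in $u$, $u^2$, and $|\nabla u|^2$.

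The decisive algebraic step is to split the Hessian as $\nabla^2 u = \mathring{\nabla}^2 u + \tfrac{\Delta u}{n}\,g$, which gives $|\nabla^2 u|^2 = |\mathring{\nabla}^2 u|^2 + \tfrac{(\Delta u)^2}{n}$, and then to substitute $(\Delta u)^2 = (1+nku)^2$. At this point all scalar contributions involving $u$, $u^2$, and the constant $\tfrac{1}{n}$ cancel identically, leaving the clean Weitzenböck-type identity
\[
\Delta P(u) = 2\,|\mathring{\nabla}^2 u|^2 + 2\bigl[\mathrm{Ric} - (n-1)k\,g\bigr](\nabla u, \nabla u).
\]
Both summands on the right are non-negative under the hypothesis $\mathrm{Ric}\geq (n-1)k\,g$, which immediately yields $\Delta P(u)\geq 0$. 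For the rigidity statement, the equality $\Delta P(u)\equiv 0$ forces $|\mathring{\nabla}^2 u|^2\equiv 0$, that is, $\nabla^2 u = \tfrac{\Delta u}{n}\,g$ throughout $\Omega$; I would then invoke the Obata-type characterization already cited above (\cite[Lemma~6]{farina-roncoroni}) to conclude that $\Omega$ is a metric ball and $u$ is radial, while the converse reduces to a direct verification on the rotationally symmetric model.

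The main obstacle is computational rather than conceptual: the cancellation of the lower-order terms in the second step is exact but not transparent, and carrying out the bookkeeping correctly is exactly what legitimizes the definition of $P(u)$. The only genuinely external input is the Obata-type rigidity theorem used to close the equality case.
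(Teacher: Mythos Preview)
Your proposal is correct and follows precisely the route the paper indicates: the lemma is quoted from \cite{farina-roncoroni} without a self-contained proof, the authors only remarking that ``Bochner's formula applied to $\nabla u$ can be used to prove'' it. Your computation yields exactly the identity $\Delta P(u) = 2\,|\mathring{\nabla}^2 u|^2 + 2\,[\mathrm{Ric} - (n-1)k\,g](\nabla u,\nabla u)$, from which subharmonicity is immediate and the rigidity follows via the same Obata-type lemma \cite[Lemma~6]{farina-roncoroni} that the paper itself invokes in the proof of Corollary~\ref{meanc}.
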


\begin{remark}
\label{remark2}
 These two adapted tools, a Pohozaev-type identity and a suitable maximum principle, make a rule to study Serrin's problem using Weinberger's technique. We remember that to study \eqref{serrinok1}-\eqref{serrinok2} in Euclidean domains, Weinberger defined the $P$-function 
$$
P(u)=|\nabla u|^2+\frac{2}{n}u.
$$ 
This function is subharmonic by 
Cauchy-Schwartz inequality
\begin{equation}\label{pfunction}
 \Delta P(u)=2|\nabla^2 u|^2-\frac{2}{n}\geq 0,  
\end{equation}
and, moreover, 
$$
P(u)=c^2 \quad \text{ on $\partial\Omega$}.
$$
Hence, from the strong maximum principle, one gets that either $P(u)\equiv c^2$ in $\overline{\Omega}$ or $P(u)< c^2$ in $\Omega$. The second case would give a contradiction with the {\it classical Pohozaev identity} applied to \eqref{serrinok1}-\eqref{serrinok2} (if it is necessary, see, e.g., Struwe~\cite[page~171]{struwe} for this identity). This means that $P(u)\equiv c^2$ in $\overline{\Omega}$ and \eqref{pfunction} is equality, which implies that $u$ is a radial function and $\Omega$ is a ball.   
\end{remark}

 From now on, the goal is to provide a condition for the $P$-function associated with the problem to be harmonic. The idea, which follows the original Weinberger's approach, is to use the maximum principle to the sub-harmonic function $P(u)$ to prove that it is constant given an integral condition. The following result, mainly its geometric consequences, gives such a condition. It is directly inspired by the Pohozaev-type identity obtained in Lemma \ref{lemmadif}.

\begin{theorem}\label{mainthm1}
    Let $(M,g)$ be a Riemannian manifold endowed with a closed conformal vector field $X$ such that $Ric\geq (n-1)kg$. If $u$ is a solution of \eqref{serrin}, the conformal factor $\varphi$ of $X$ is positive and 
   \begin{equation}\label{main1}
       \int_{\Omega}u^2\left[\varphi(-R+n(n-1)k)-\frac{1}{2}X(R)\right]\geq 0,
   \end{equation}
    then $\Omega$ is a metric ball and $u$ is a radial function.
    \end{theorem}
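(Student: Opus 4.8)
The plan is to follow Weinberger's $P$-function scheme, using the Pohozaev identity of Lemma \ref{lemmadif} as the ingredient that, together with the subharmonicity furnished by Lemma \ref{farina-roncoroni}, forces the $P$-function to be constant. First I would record the boundary behaviour of $P$ as defined in \eqref{pfunction2}: since $u=0$ and $|\nabla u|=c$ on $\partial\Omega$, one reads off $P(u)=c^2$ on $\partial\Omega$. Because $Ric\geq (n-1)kg$, Lemma \ref{farina-roncoroni} gives $\Delta P(u)\geq 0$, so the maximum principle yields $P(u)\leq c^2$ throughout $\Omega$; in particular $c^2-P(u)\geq 0$ on $\overline{\Omega}$.

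The heart of the argument is to integrate $c^2-P(u)$ against the (positive) weight $\varphi$ and to identify the result with the left-hand side of \eqref{main1}. To this end I would first compute $\int_\Omega \varphi|\nabla u|^2$ by integrating $\dv(\varphi u\,\nabla u)$: using the boundary condition $u=0$, the equation $\Delta u=-1-nku$, and identity \eqref{b}, one obtains
$$\int_\Omega \varphi|\nabla u|^2=\frac{1}{2}\int_\Omega u^2\Delta\varphi+\int_\Omega \varphi u+nk\int_\Omega \varphi u^2.$$
Substituting this into $\int_\Omega P(u)\varphi=\int_\Omega \varphi|\nabla u|^2+\frac{2}{n}\int_\Omega \varphi u+k\int_\Omega \varphi u^2$ produces a formula for $\int_\Omega P(u)\varphi$ in terms of $\int_\Omega \varphi u$, $\int_\Omega \varphi u^2$ and $\int_\Omega u^2\Delta\varphi$.

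Next I would eliminate $\int_\Omega \varphi u$ via the Pohozaev identity \eqref{Pohozaev} and replace $\Delta\varphi$ by $-\frac{1}{n-1}\big(\varphi R+\frac{1}{2}X(R)\big)$, which is precisely \eqref{trick2}. After collecting the scalar-curvature terms — the coefficients $-\frac{1}{2(n-1)}$ and $-\frac{n-2}{2n(n-1)}$ should combine to $-\frac{1}{n}$ — and the $k$-terms, the outcome ought to be the clean identity
$$\int_\Omega \big(c^2-P(u)\big)\varphi=-\frac{1}{n}\int_\Omega u^2\Big[\varphi\big(n(n-1)k-R\big)-\frac{1}{2}X(R)\Big].$$
This bookkeeping is the step I expect to be the main obstacle: the algebra must be carried through so that the curvature combination on the right is \emph{exactly} the integrand of \eqref{main1}, and with the correct overall sign; a slip would destroy the matching.

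Granting this identity, the conclusion is immediate. Hypothesis \eqref{main1} makes the right-hand side nonpositive, while the maximum-principle step makes the integrand $\big(c^2-P(u)\big)\varphi$ nonnegative, since $\varphi>0$; hence both sides vanish and $\big(c^2-P(u)\big)\varphi\equiv 0$ on $\Omega$. As $\varphi>0$, this forces $P(u)\equiv c^2$, so $\Delta P(u)=0$, and the equality case of Lemma \ref{farina-roncoroni} then yields that $\Omega$ is a metric ball and $u$ is a radial function.
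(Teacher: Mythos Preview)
Your proposal is correct and follows essentially the same approach as the paper: both compute $\int_\Omega \varphi|\nabla u|^2$ by integrating by parts, invoke the Pohozaev identity of Lemma~\ref{lemmadif} and \eqref{trick2}, and combine the coefficients $-\frac{1}{2(n-1)}$ and $-\frac{n-2}{2n(n-1)}$ into $-\frac{1}{n}$ to match the integrand of \eqref{main1}. The only (minor, presentational) difference is that the paper argues by contradiction via the strong maximum principle dichotomy ($P(u)\equiv c^2$ or $P(u)<c^2$), whereas you use the weak maximum principle to get $P(u)\le c^2$ and package the computation as the clean identity $\int_\Omega (c^2-P(u))\varphi=-\frac{1}{n}\int_\Omega u^2[\varphi(n(n-1)k-R)-\frac12 X(R)]$, from which the conclusion follows directly.
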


\begin{proof}
    Consider the $P$-function given by \eqref{pfunction2}. We observe that $P(u)=c^2$ on $\partial\Omega$. Suppose by contradiction that $P(u)<c^2$ in $\Omega.$
    Thus, 
    \begin{equation}
        \varphi(|\nabla u|^2+\frac{2}{n}u+ku^2)<c^2\varphi.
    \end{equation}
    
From Lemma \ref{lemmadif}, we get that

\begin{equation}\label{A}
    \frac{2}{n}\int_{\Omega}\varphi u=- \int_{\Omega}\varphi u+      c^2\int_{\Omega}\varphi-\frac{n-2}{2n(n-1)}\int_{\Omega}u^2(\varphi R+\frac{1}{2}X(R))-2k\int_{\Omega}\varphi u^2.
\end{equation}
 
On the other hand, taking into account that $u=0$ on the boundary, we use the divergence theorem 

$$\int_{\Omega}\varphi div(u\nabla u)=\frac{1}{2}\int_{\Omega}u^2\Delta\varphi=-\frac{1}{2(n-1)}\int_{\Omega}u^2(\varphi R+\frac{1}{2}X(R)).$$
Moreover, since $\Delta u=-1-nku$ is not hard to see that 
 $$\int_{\Omega}\varphi div(u\nabla u)=\int_{\Omega}\varphi(|\nabla u|^2-u(1+nku)).$$ 
From the above equations, we conclude that 
\begin{equation}\label{B}
  \int_{\Omega}\varphi(|\nabla u|^2)=-\frac{1}{2(n-1)}\int_{\Omega}u^2(\varphi R+\frac{1}{2}X(R))+ \int_{\Omega}\varphi u(1+nku).
\end{equation}
Since  $\varphi(|\nabla u|^2+\frac{2}{n}u+ku^2)<c^2\varphi$, from \eqref{A} and \eqref{B}, we conclude that
$$-\frac{1}{n-1}(\frac{n-2}{2n}+\frac{1}{2})\int_{\Omega}u^2(\varphi R+\frac{1}{2}X(R))+k\int_{\Omega}u^2\varphi(n-1)<0.$$
Thus,

$$\int_{\Omega}u^2\left[\varphi (-R+n(n-1)k)-\frac{1}{2}X(R)\right]<0,$$
what gives a contradiction with \eqref{main1}. Thus, using maximum principle, $P(u)\equiv c^2$ and, consequently, $\Delta P(u)\equiv 0$. The rigidity conclusion follows from Lemma \ref{farina-roncoroni}. 
\end{proof}

In order to give applications of the last result, we look for geometric properties which imply the integral condition \eqref{main1}. For example, the first obvious condition arises in the following result in the context of Einstein manifolds.

\begin{proof}[Proof of Theorem \ref{teoB}]
    Einstein's condition $Ric=(n-1)kg$ implies that $R=n(n-1)k$, $X(R)=0$, and, then \eqref{main1} is satisfied
\end{proof}

This last result can be used to give explicit examples in warped products. We remember that if $M=I\times_{f}N$ is a warped product, the vector field $X=f\partial_t$ is a closed conformal vector field with conformal factor $\varphi=f'$. Thus, we get the following results.

\begin{corollary}
    Let $M$ be a warped product given by $\mathbb{R}\times_{e^t}N,$ where $N$ is a Ric flat manifold. If $u$ is a solution to the problem \ref{serrin}, with $k=-1,$ then $\Omega$ is a metric ball, and $u$ is a radial function.
\end{corollary}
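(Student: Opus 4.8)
The plan is to show that the warped product $M=\mathbb{R}\times_{e^t}N$ fits exactly into the hypotheses of Theorem \ref{teoB} with $k=-1$, and then to invoke that theorem directly. Concretely, two things must be verified: that $M$ carries a closed conformal vector field with \emph{positive} conformal factor, and that $M$ is Einstein with $Ric=-(n-1)g$. Once both are in place, since the given $u$ solves \eqref{serrin} with $k=-1$ (which is precisely problem \eqref{serrinok3}--\eqref{serrinok4} with $k=-1$, the positivity being built into \eqref{serrin}), Theorem \ref{teoB} applies verbatim.

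The first point is immediate from the discussion preceding the statement: for a warped product $I\times_f N$ the field $X=f\partial_t$ is closed conformal with conformal factor $\varphi=f'$. Here $f(t)=e^t$, so $X=e^t\partial_t$ and $\varphi=e^t>0$, as required. For the second point I would compute the Ricci tensor of $M$ from the standard warped-product curvature formulas. Writing $m=n-1=\dim N$ and denoting by $\partial_t$ the unit vector tangent to the base and by $V,W$ vectors tangent to the fiber, one has
\begin{align*}
Ric(\partial_t,\partial_t) &= -m\,\frac{f''}{f}, \\
Ric(\partial_t,V) &= 0, \\
Ric(V,W) &= Ric_N(V,W)-\left(\frac{f''}{f}+(m-1)\frac{(f')^2}{f^2}\right)g(V,W).
\end{align*}
For $f=e^t$ one has $f''/f=1$ and $(f')^2/f^2=1$, so the base-direction term gives $Ric(\partial_t,\partial_t)=-(n-1)=-(n-1)g(\partial_t,\partial_t)$, while the fiber-direction coefficient collapses to $1+(m-1)=n-1$, yielding $Ric(V,W)=Ric_N(V,W)-(n-1)g(V,W)$. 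Since $N$ is Ricci flat, $Ric_N=0$, and therefore $Ric=-(n-1)g$ on all of $M$; that is, $M$ is Einstein with $Ric=(n-1)k\,g$ for $k=-1$.

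With both hypotheses confirmed, Theorem \ref{teoB} gives at once that $\Omega$ is a metric ball and $u$ is radial. The only nonroutine step is the Ricci computation, and even that is short: the exponential warping is exactly what forces the two curvature contributions $f''/f$ and $(f')^2/f^2$ to equal $1$ and combine to the Einstein constant $-(n-1)$, while the Ricci-flatness of $N$ is what removes the surviving $Ric_N$ term. I therefore expect no genuine obstacle here — the content of the corollary is the recognition that this particular warped product is Einstein with the prescribed constant, after which the result is a direct specialization of Theorem \ref{teoB}.
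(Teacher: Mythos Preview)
Your proposal is correct and follows exactly the approach the paper intends: identify $X=e^t\partial_t$ as a closed conformal field with positive conformal factor, verify (via the standard warped-product Ricci formulas) that $\mathbb{R}\times_{e^t}N$ with $N$ Ricci-flat is Einstein with $Ric=-(n-1)g$, and then invoke Theorem~\ref{teoB}. The paper leaves the Einstein verification implicit, so your explicit computation is a welcome addition rather than a deviation.
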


\begin{corollary}
    Let $M$ be a warped product given by $(\varepsilon,+\infty)\times_{\cosh t}N,$ $\varepsilon>0$, where $N$ is an Einstein manifold with scalar curvature given by $\widetilde{R}=-(n-1)(n-2)$. If $u$ is a solution of the problem \ref{serrin}, with $k=-1,$ then $\Omega$ is a metric ball, and $u$ is a radial function.
\end{corollary}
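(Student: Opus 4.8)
The plan is to bypass the integral condition \eqref{main1} altogether by showing that this warped product is an Einstein manifold, so that Theorem \ref{teoB} applies verbatim. Recall that for $M=I\times_f N$ the field $X=f\,\partial_t$ is closed conformal with conformal factor $\varphi=f'$; here $f(t)=\cosh t$, so $\varphi=\sinh t$, which is strictly positive on $(\varepsilon,+\infty)$ because $\varepsilon>0$. Hence the positivity hypothesis on $\varphi$ demanded by Theorem \ref{teoB} is free, and I only need to check that $Ric=(n-1)k\,g$ with $k=-1$, i.e.\ $Ric=-(n-1)g$.

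To verify this Einstein identity I would invoke the standard warped-product Ricci formulas with $\dim N=n-1$. Along the base direction one gets
\begin{equation*}
Ric(\partial_t,\partial_t)=-(n-1)\frac{f''}{f}=-(n-1),
\end{equation*}
since $f=\cosh t$ satisfies $f''=f$. For $V$ tangent to the fiber the relevant formula is
\begin{equation*}
Ric(V,V)=Ric_N(V,V)-\Big(f f''+(n-2)(f')^2\Big)g_N(V,V),
\end{equation*}
and the assumption that $N$ is Einstein with $\widetilde R=-(n-1)(n-2)$ translates into $Ric_N=-(n-2)g_N$. Substituting $f=\cosh t$, $f'=\sinh t$, $f''=\cosh t$ and collecting terms via $\cosh^2 t=1+\sinh^2 t$, the right-hand side should collapse to $-(n-1)\cosh^2 t\,g_N(V,V)=-(n-1)g(V,V)$; together with the vanishing of the mixed terms $Ric(\partial_t,V)=0$ this yields $Ric=-(n-1)g$ throughout $M$.

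Once the Einstein identity and $\varphi>0$ are in hand, the hypotheses of Theorem \ref{teoB} hold with $k=-1$, and its conclusion gives at once that $\Omega$ is a metric ball and $u$ is radial. The one place demanding care is the fiber computation: the exact value $\widetilde R=-(n-1)(n-2)$ and the hyperbolic identity $\cosh^2-\sinh^2=1$ must align precisely so that all $t$-dependence cancels. As a consistency check one may instead observe that the Einstein condition forces $R=n(n-1)k=-n(n-1)$ constant, whence $X(R)=0$ and the bracket in \eqref{main1} vanishes identically, recovering the same rigidity through Theorem \ref{mainthm1}.
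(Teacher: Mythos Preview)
Your proposal is correct and is precisely the route the paper intends: the corollary is presented as an immediate application of Theorem \ref{teoB}, so one only needs to verify that $X=f\partial_t$ has positive conformal factor $\varphi=\sinh t$ on $(\varepsilon,+\infty)$ and that the warped-product Ricci formulas, together with $Ric_N=-(n-2)g_N$ and $\cosh^2 t-\sinh^2 t=1$, yield $Ric=-(n-1)g$. Your fiber computation and consistency check via \eqref{main1} are both accurate.
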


In order to weaken the Einstein manifold assumption, we deal again with the closed conformal property of $X$. First, we notice that, using \eqref{trick2},
 
\begin{eqnarray}   \Delta\varphi+nk\varphi&=&-\frac{1}{n-1}(\varphi R+\frac{1}{2}X(R))+nk\varphi \nonumber\\
&=&\frac{\varphi}{n-1}[-R+nk(n-1)]-\frac{1}{2(n-1)}X(R),\label{laplacphi}
\end{eqnarray}
and, thus, the inequality
    \eqref{main1} is equivalent to
\begin{equation}\label{trick1'}
\int_{\Omega}u^2(\Delta\varphi+nk\varphi)\geq 0.  
\end{equation}

With this in mind, we have the following result
\begin{corollary}
 Let $M$ be a manifold endowed with a closed conformal vector field $X$ satisfying $Ric\geq (n-1)kg.$ In addition, suppose that $u$ is a solution of the problem \ref{serrin} and $Ric(X,\nabla u)\geq (n-1)k\langle\nabla u,X\rangle.$ If the conformal factor $\varphi$ of $X$ is positive then $\Omega$ is a metric ball and $u$ is a radial function. 
\end{corollary}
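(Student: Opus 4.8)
```latex
The plan is to reduce the hypotheses of this final corollary to the integral condition \eqref{main1} (equivalently, to \eqref{trick1'}) and then invoke Theorem \ref{mainthm1}. The extra assumption here is the pointwise inequality $Ric(X,\nabla u)\geq (n-1)k\langle\nabla u, X\rangle$, and the key observation is that the quantity $-R+n(n-1)k$ together with $-\tfrac{1}{2}X(R)$ appearing in \eqref{main1} can be rewritten, via \eqref{laplacphi}, as $(n-1)(\Delta\varphi+nk\varphi)$. So the entire problem becomes showing that the weighted integral $\int_\Omega u^2(\Delta\varphi+nk\varphi)$ is non-negative under the new curvature hypothesis.

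First I would recall the closed conformal identity $Ric(X,\cdot)=-(n-1)\nabla\varphi$, which was already established in the Bochner-type lemma above. Pairing this with $\nabla u$ gives
\begin{equation*}
Ric(X,\nabla u)=-(n-1)\langle\nabla\varphi,\nabla u\rangle.
\end{equation*}
The assumption $Ric(X,\nabla u)\geq (n-1)k\langle\nabla u,X\rangle$ therefore translates into a pointwise inequality relating $\langle\nabla\varphi,\nabla u\rangle$ and $\langle X,\nabla u\rangle$. Next I would try to pass from this \emph{pointwise gradient} inequality to the \emph{integral} inequality \eqref{trick1'} by integrating against a suitable weight, most naturally $u^2$ or $u\,du$, and integrating by parts using $u=0$ on $\partial\Omega$. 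The relations \eqref{a} and \eqref{b}, namely $\int_\Omega\langle X,\nabla u\rangle=-n\int_\Omega u\varphi$ and $\int_\Omega u\langle\nabla\varphi,\nabla u\rangle=-\tfrac12\int_\Omega u^2\Delta\varphi$, are exactly the integration-by-parts formulas that will convert gradient pairings into the Laplacian-of-$\varphi$ term and the weighted $\varphi$ term that appear in \eqref{trick1'}.

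Concretely, I would multiply the pointwise inequality by an appropriate non-negative factor (a power of $u$) and integrate over $\Omega$; the goal is to produce, after integration by parts, precisely $\int_\Omega u^2(\Delta\varphi+nk\varphi)$ on one side with a sign that can be controlled by the hypothesis. The term $\langle X,\nabla u\rangle$ integrates to give $-n\int u\varphi$, while $\langle\nabla\varphi,\nabla u\rangle$ integrates to give a $\Delta\varphi$ contribution, so the two sides of the curvature inequality should assemble into the two summands $\Delta\varphi$ and $nk\varphi$. Once \eqref{trick1'} is verified, the conclusion is immediate: $\Omega$ is a metric ball and $u$ is radial by Theorem \ref{mainthm1}.

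The main obstacle I anticipate is the passage from the pointwise first-order inequality to the second-order integral inequality. A pointwise bound on $\langle\nabla\varphi,\nabla u\rangle$ in terms of $\langle X,\nabla u\rangle$ does not integrate directly into a bound on $\int u^2\Delta\varphi$, because the integration by parts \eqref{b} introduces a factor of $u$ and converts a gradient into a Laplacian; one must choose the correct weight and check that all boundary terms vanish (they do, since $u=0$ on $\partial\Omega$). Getting the constants $n$, $(n-1)$, and the coefficient $\tfrac12$ on $X(R)$ to line up exactly so that the assembled expression is $\int_\Omega u^2(\Delta\varphi+nk\varphi)$ rather than some other linear combination is the delicate bookkeeping step, and is where I would spend most of the care.
```
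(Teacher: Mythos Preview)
Your plan is correct and coincides with the paper's argument: verify \eqref{trick1'} from the pointwise hypothesis and then invoke Theorem \ref{mainthm1}. The only imprecision is in the bookkeeping you flag at the end. The right weight is $u$, not $u^{2}$: multiply the pointwise inequality $-(n-1)\langle\nabla\varphi,\nabla u\rangle=Ric(X,\nabla u)\geq (n-1)k\langle X,\nabla u\rangle$ by $u\geq 0$ and integrate. Then \eqref{b} handles the left side, giving $\tfrac{1}{2}\int_{\Omega}u^{2}\Delta\varphi$, but \eqref{a} as stated is not the identity you need on the right; you need its $u$-weighted analogue $\int_{\Omega}u\langle X,\nabla u\rangle=\tfrac12\int_{\Omega}\langle X,\nabla u^{2}\rangle=-\tfrac{n}{2}\int_{\Omega}u^{2}\varphi$, obtained by the same divergence argument applied to $u^{2}$. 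With this, the constants align immediately and yield $\int_{\Omega}u^{2}(\Delta\varphi+nk\varphi)\geq 0$, exactly as the paper does (the paper just packages the same integration by parts as $div(u^{2}Ric(X))$ and $div(u^{2}X)$).
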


\begin{proof}
 Since $X$ is a closed conformal vector field,
\begin{equation}\label{trick3}
    -(n-1)u^2\Delta\varphi=u^2div(Ric(X))=div(u^2Ric(X))-\langle \nabla u^2, Ric(X)\rangle
\end{equation}
and
\begin{equation}\label{trick4}
    u^2nk\varphi=u^2kdiv(X)=kdiv(u^2X)-k\langle\nabla u^2, X\rangle
\end{equation}

 Since $u=0$ along boundary, from  \eqref{trick3} and \eqref{trick4} we obtain
$$\int_{\Omega}u^2(\Delta\varphi+nk\varphi)=\frac{2}{n-1}\int_{\Omega}u(Ric(X,\nabla u)-(n-1)k\langle\nabla u,X\rangle).$$
Then the condition $Ric(X,\nabla u)\geq (n-1)k\langle\nabla u, X\rangle$ implies \eqref{trick1'}, and finally we use Theorem \ref{mainthm1}  to guarantee the desired result.
\end{proof}

We note that these series of identities arising from the existence of a closed conformal field also allow dealing with the case of manifolds with constant scalar curvature. The influence of the scalar curvature in the existence of solutions is an exciting topic that, to the knowledge of authors, is treated for the first time in the work of Fall-Millend (\cite{fall}). By considering a non-degenerate critical point $p$ of the scalar curvature function in a Riemannian manifold $(M,g)$, they can construct a smooth foliation $(\partial\Omega_{\varepsilon})$ of a neighborhood of $p$, where $\Omega_{\varepsilon}$ is a domain in which \eqref{serrin} with $k=0$ possesses a solution (here the constant $c=\frac{\varepsilon}{n}$). The following result, which can be viewed as a Minkowski-type identity, provides a necessary condition for having a solution in a case of ambient with constant scalar curvature (and therefore with degenerate critical points of the scalar curvature).
\begin{proof}[Proof of Theorem \ref{teoC}]
   Being $u$ a solution of \eqref{serrin} we get

$$-\int_\Omega\varphi (1+nku)-\int_{\Omega}u\Delta\varphi=\int_{\Omega}div(\varphi\nabla u-u\nabla\varphi)=\int_{\partial\Omega}\varphi\langle\nabla u, \nu\rangle$$
Thus,
$$-\int_{\Omega}u(\Delta\varphi+nk\varphi)=\int_{\partial\Omega}\varphi\langle\nabla u, \nu\rangle+\int_{\Omega}\varphi$$
Now, since $R=n(n-1)k$, we conclude from \eqref{laplacphi}  
$$\int_{\partial\Omega}\varphi\langle\nabla u, \nu\rangle+\int_{\Omega}\varphi=0.
$$
On the other hand, since $\nu=-\frac{\nabla u}{|\nabla u|}$ and $n\varphi= div X$ we get that

\begin{equation*}\label{divergencethe}
-c\int_{\partial\Omega}div(X)+\int_{\partial\Omega}\langle X,\nu\rangle=0.    
\end{equation*}
Now, a straightforward calculation shows that
$\frac{n-1}{n}div(X)=\widetilde{div}(X^T)+H\langle X,\nu\rangle,$
where $\widetilde{div}$ denotes the divergence operator of $\partial\Omega$ and $X^T$ is the tangential part of the vector field $X.$
Since $\partial\Omega$ is closed, we obtain the desired identity from the divergence theorem.

Finally if, in particular, $k=0$, we integrate the identity $\Delta u=-1$ to get that
$c|\partial\Omega|=|\Omega|.$ Taking into account that $\langle X,\nu\rangle>0$ we conclude the desired result.

\end{proof}

The previous theorem provides a nonexistence result.

\begin{corollary}
    Let $M$ be a manifold endowed with a closed conformal vector field $X$ and constant scalar curvature $R=n(n-1)k$. Let $\Omega$ be a bounded domain with $\langle X,\nu\rangle>0$. If  $(n-1)-cnH$ has strict signal on $\partial\Omega$, then there exists no a solution of problem \ref{serrin}, on $\Omega$.
\end{corollary}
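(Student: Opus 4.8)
The plan is to derive a contradiction directly from the Minkowski-type identity established in Theorem \ref{teoC}. First I would suppose, toward a contradiction, that a solution $u$ of problem \eqref{serrin} exists on $\Omega$. Since $M$ carries the closed conformal vector field $X$ and has constant scalar curvature $R=n(n-1)k$, all hypotheses of Theorem \ref{teoC} are satisfied, so we may invoke the identity
$$
\int_{\partial\Omega}\langle X,\nu\rangle\,\big((n-1)-cnH\big)=0.
$$

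The second step is to analyze the sign of the integrand. By assumption $\langle X,\nu\rangle>0$ on $\partial\Omega$, and the factor $(n-1)-cnH$ has strict sign on $\partial\Omega$, which I read as: this quantity is sign-definite, i.e. either strictly positive at every boundary point or strictly negative at every boundary point. In either case the product $\langle X,\nu\rangle\,\big((n-1)-cnH\big)$ is of one fixed strict sign throughout $\partial\Omega$, and in particular it never vanishes.

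It then follows that the integral over the (nonempty, closed) boundary $\partial\Omega$ is either strictly positive or strictly negative, hence nonzero. This contradicts the vanishing forced by Theorem \ref{teoC}, and therefore no solution of \eqref{serrin} can exist on $\Omega$, which is the claim.

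There is no substantial obstacle here, as the statement is an immediate consequence of Theorem \ref{teoC}: the only point deserving care is the interpretation of the hypothesis that $(n-1)-cnH$ ``has strict signal,'' which must mean that this function is sign-definite along all of $\partial\Omega$. This is precisely what guarantees, together with $\langle X,\nu\rangle>0$, that the integrand does not change sign and the integral cannot vanish. I would also remark that this nonexistence conclusion complements the existence phenomenon of Fall--Millend \cite{fall} recalled earlier, since it exhibits a geometric obstruction to solvability in the constant scalar curvature setting.
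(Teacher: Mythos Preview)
Your proposal is correct and matches the paper's approach: the paper states this corollary immediately after Theorem~\ref{teoC} with the single remark that ``the previous theorem provides a nonexistence result'' and gives no further proof, so the argument you wrote out---assume a solution, apply the identity of Theorem~\ref{teoC}, and note that the sign hypotheses force the integral to be nonzero---is exactly what is intended.
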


Theorem \ref{teoC} also provides a rigidity result to Serrin's problem. The following result uses this Minkowski-type formula (joined with Reilly's technique explored in Section 2) to obtain rigidity for geodesic balls. It is interesting to note that this coupling of techniques was used by Reilly himself to get an alternative proof for the classical Alexandrov's Theorem in the Euclidean space (see, for example, \cite{reilly} for this alternative proof, and also \cite{poggesi} for a comprehensive discussion relating this theorem with Serrin's problem in euclidean domains).


  \begin{corollary}\label{corolario4}
Let $(M^n,g)$ be an Einstein manifold with scalar curvature $R=n(n-1)k$ and endowed with a closed conformal vector field $X$. Let $\Omega$ be a bounded domain with $\langle X,\nu\rangle>0$ on $\partial\Omega$. If $u$ is a solution of \eqref{serrin}, then $\Omega$ is a geodesic ball, and $u$ is a radial function.
\end{corollary}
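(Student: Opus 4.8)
The plan is to run Weinberger's $P$-function argument but to substitute the Pohozaev identity with the Minkowski-type identity of Theorem \ref{teoC}, and then to extract rigidity through the Reilly computation already packaged in Corollary \ref{meanc}. Throughout I will use that an Einstein manifold with $Ric=(n-1)kg$ satisfies the hypotheses of both Lemma \ref{farina-roncoroni} and Lemma \ref{lemmareilly}, and that in the latter the term $\int_\Omega[Ric-(n-1)kg](\nabla u,\nabla u)$ vanishes identically.

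First I would consider the $P$-function \eqref{pfunction2}, $P(u)=|\nabla u|^2+\frac{2}{n}u+ku^2$. On $\partial\Omega$ one has $u=0$ and $|\nabla u|=c$, so $P(u)=c^2$ there; by Lemma \ref{farina-roncoroni}, $P(u)$ is subharmonic, and the maximum principle gives $P(u)\le c^2$ in $\Omega$. The decisive computation is the outward boundary derivative of $P$. Using $u=0$ and $\nabla u=-c\,\nu$ on $\partial\Omega$, together with the decomposition $\Delta u=\nabla^2 u(\nu,\nu)+Hu_\nu$ (valid since $u$ vanishes on $\partial\Omega$, so the tangential Laplacian drops out) and $\Delta u=-1$ on $\partial\Omega$, a direct calculation yields
$$
P_\nu=\frac{2c}{n}\big[(n-1)-cnH\big]\qquad\text{on }\partial\Omega.
$$
Since $P(u)$ attains its maximal value $c^2$ along $\partial\Omega$ while staying below $c^2$ inside, the outward derivative satisfies $P_\nu\ge 0$, whence
$$
(n-1)-cnH\ge 0\qquad\text{on }\partial\Omega.
$$

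Next I would invoke Theorem \ref{teoC}, which for $R=n(n-1)k$ reads $\int_{\partial\Omega}\langle X,\nu\rangle\,[(n-1)-cnH]=0$. As $\langle X,\nu\rangle>0$ on $\partial\Omega$ and the bracket is non-negative by the previous step, the integrand is non-negative with vanishing integral; hence $(n-1)-cnH\equiv 0$, i.e. $c=\frac{n-1}{nH}$. Equivalently $u_\nu=-c=-\frac{n-1}{nH}$, which is precisely the overdetermined condition \eqref{overdet}. The conclusion that $\Omega$ is a geodesic ball and $u$ is radial then follows from Corollary \ref{meanc}; alternatively, feeding $(n-1)-cnH\equiv 0$ back into Lemma \ref{lemmareilly} gives $\int_\Omega|\mathring{\nabla}^2 u|^2=0$, and the Obata-type theorem of \cite{farina-roncoroni} finishes the argument.

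The main obstacle is the passage from the single scalar identity of Theorem \ref{teoC} to a pointwise conclusion: that identity only controls a weighted integral of $(n-1)-cnH$ and so cannot by itself pin down the mean curvature. The crucial observation is that the subharmonicity of $P(u)$ supplies the missing pointwise sign $(n-1)-cnH\ge 0$, via the identification of $(n-1)-cnH$ with $P_\nu$ up to a positive factor; this is exactly what meshes with the positivity of $\langle X,\nu\rangle$ to force equality. I expect the boundary evaluation of $P_\nu$ and the careful bookkeeping of the sign conventions for $\nu$, $u_\nu$ and $H$ to be the only delicate points, the remainder being a direct assembly of results already established in the paper.
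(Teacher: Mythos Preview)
Your proposal is correct and follows essentially the same approach as the paper's own proof: both compute $P_\nu$ on $\partial\Omega$ to obtain the pointwise sign $(n-1)-cnH\ge 0$ from the subharmonicity of $P$, pair this with the Minkowski-type identity of Theorem \ref{teoC} and the hypothesis $\langle X,\nu\rangle>0$ to force $(n-1)-cnH\equiv 0$, and then finish via Corollary \ref{meanc}. The only cosmetic difference is that the paper phrases the non-negativity of $P_\nu$ as an application of Hopf's lemma, while you obtain it directly from the weak maximum principle; the two are equivalent here.
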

\begin{proof}
$P$-function \eqref{pfunction2} associated to \eqref{serrin} satisfies
\begin{equation}\label{final1}
P(u)_{\nu}=2{\nabla}^2 u(\nabla u,\nu)+\frac{2}{n}u_{\nu}+2kuu_{\nu}.   
\end{equation}
Furthermore,
\begin{eqnarray}\label{final2}
 Hu_{\nu}=|\nabla u|div \frac{\nabla u}{|\nabla u|}=\Delta u-\nabla^2 u(\nu,\nu)=-1-nku-\nabla^2 u(\nu,\nu).
\end{eqnarray}
Since $u=0$ on $\partial\Omega$, $\nabla u= u_{\nu}\nu$, we replace \eqref{final2} in \eqref{final1} to obtain that, along $\partial\Omega$, 
\begin{equation}\label{final3}
 P(u)_{\nu}=-\frac{2}{n}u_{\nu}((n-1)+nHu_{\nu}).   
\end{equation}
Lemma \ref{farina-roncoroni} implies that $P(u)$ is sub-harmonic. Known \eqref{final3}, Hopf's maximum principle applied to $P(u)$ gives
\begin{equation*}
-\frac{2}{n}u_{\nu}((n-1)+nHu_{\nu})\geq 0,   
\end{equation*}
on $\partial\Omega$. Thus, since $u_{\nu}=-c$ on $\partial\Omega$,
$$0\leq\int_{\partial\Omega}((n-1)+nHu_{\nu})\langle X,\nu\rangle.$$

From Theorem \ref{teoC} and above inequality  we conclude that $(n-1)+nHu_{\nu}=0.$ Thus, from Corollary \ref{meanc}, we get the desired result.
\end{proof}

By the end of this paper, we provide more one necessary condition to find solutions to Serrin's problem in manifolds with constant scalar curvature (here we are restricted to $k=0$ case in \eqref{serrin}, but a similar analysis also be done for any $k$ with a suitable change in the bound for the scalar curvature below).
\begin{theorem}
Let $M$ be a manifold with constant scalar curvature $R$ and endowed with a closed conformal vector field such that the conformal factor $\varphi$ of $X$ is positive. Suppose that $\Omega\subset M$ is a domain where a solution exists for 
\begin{equation*}\label{serrin,k=0}
\left\{\begin{array}{rcl}
\Delta u&=& -1,\\
u&>&  0\quad \hbox{in}\quad int(\Omega),\\
u&=& 0\quad \hbox{on}\quad  \partial\Omega,\\
|\nabla u|&=&c\quad \hbox{on}\quad  \partial\Omega,
\end{array}\right.
\end{equation*}
Then,
   $R>-\frac{|\partial\Omega|^2}{|\Omega|^2}\frac{(n-1)}{2(n-2)}\frac{(n+2)^2}{n}.$
\end{theorem}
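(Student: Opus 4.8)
The plan is to read off the scalar curvature from the Pohozaev identity of Lemma~\ref{lemmadif} and then to pin down the sign of the resulting expression with a single completed–square integral. Since $R$ is constant we have $X(R)=0$, so specializing \eqref{Pohozaev} to $k=0$ collapses it to
\begin{equation*}
\frac{n+2}{n}\int_\Omega \varphi u = c^2\int_\Omega \varphi-\frac{(n-2)R}{2n(n-1)}\int_\Omega \varphi u^2 .
\end{equation*}
Before using this I would record the normalization of $c$: integrating $\Delta u=-1$ over $\Omega$ and using $u=0$, $|\nabla u|=c$ on $\partial\Omega$ (so that $u_\nu=-c$) gives $c\,|\partial\Omega|=|\Omega|$, i.e. $1/c^2=|\partial\Omega|^2/|\Omega|^2$. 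This is precisely the factor appearing in the claimed bound, so it is natural to carry $c$ symbolically and substitute only at the very end.

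Next I would abbreviate $B=\int_\Omega \varphi$, $A=\int_\Omega \varphi u$, $C=\int_\Omega \varphi u^2$, all three strictly positive since $\varphi>0$ and $u>0$ in the interior. Solving the displayed identity for $R$ (here one needs $n\ge 3$ so that $n-2\neq 0$, and $C>0$ legitimizes the division) gives
\begin{equation*}
R=\frac{2n(n-1)}{(n-2)\,C}\Big(c^2 B-\frac{n+2}{n}A\Big).
\end{equation*}
Thus a lower bound for $R$ is equivalent to a lower bound for $c^2 B-\frac{n+2}{n}A$ measured against $C$, and the prefactor $\frac{2n(n-1)}{(n-2)C}$ is positive, so it preserves inequalities.

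The crux is to supply that lower bound from positivity of a square. The elementary inequality
\begin{equation*}
0<\int_\Omega \varphi\Big(\sqrt{2n}\,c-\frac{n+2}{\sqrt{2n}\,c}\,u\Big)^2
=2nc^2 B-2(n+2)A+\frac{(n+2)^2}{2nc^2}\,C
\end{equation*}
is \emph{strict}: $\varphi>0$, and the non-constant function $u$ (it vanishes on $\partial\Omega$ yet is positive inside) cannot be identically equal to the constant $2nc^2/(n+2)$, so the integrand is positive on a set of positive measure. Dividing by $2n$ yields $c^2 B-\frac{n+2}{n}A>-\frac{(n+2)^2}{4n^2c^2}C$; feeding this into the formula for $R$, cancelling $C$, and replacing $1/c^2=|\partial\Omega|^2/|\Omega|^2$ produces exactly
\begin{equation*}
R>-\frac{|\partial\Omega|^2}{|\Omega|^2}\,\frac{(n-1)}{2(n-2)}\,\frac{(n+2)^2}{n}.
\end{equation*}

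The only genuine subtlety—and the step I would double-check—is the choice of coefficients in the square. They must be matched so that the cross term reproduces $2(n+2)A$ while the coefficients of $B$ and $C$ are $2nc^2$ and $(n+2)^2/(2nc^2)$ respectively; any other normalization degrades the constant. Equivalently, this is the sharp Young inequality $\frac{n+2}{n}A\le \tfrac{n+2}{2n}\big(\varepsilon B+\varepsilon^{-1}C\big)$ optimized at $\varepsilon=2nc^2/(n+2)$, which is the value that makes the $B$–term cancel against $c^2B$. I would also flag that the argument uses $n\ge 3$ and $c>0$ (both automatic here), and that the strictness of the final inequality is inherited verbatim from the strictness of the square integral.
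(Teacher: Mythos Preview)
Your proof is correct and follows essentially the same route as the paper: both specialize the Pohozaev identity of Lemma~\ref{lemmadif} to $k=0$ with constant $R$ to obtain $\int_\Omega \varphi\big(\frac{n+2}{n}u - c^2 + \frac{(n-2)R}{2n(n-1)}u^2\big)=0$, and then extract the bound on $R$ from a sign analysis of the quadratic in $u$. The only cosmetic difference is that you complete the square directly, whereas the paper argues by contradiction via the discriminant of that same quadratic; the constants and the source of strictness (non-constancy of $u$) match exactly.
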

\begin{proof}
    In fact, suppose by contradiction that $u$ is solution of \eqref{serrin} on a bounded domain $\Omega\subset M$
    satisfying $R\leq -\frac{|\partial\Omega|^2}{|\Omega|^2}\frac{(n-1)}{2(n-2)}\frac{(n+2)^2}{n}.$
    Since the scalar curvature is constant, we get, from \eqref{Pohozaev}, that 
    \begin{equation}\label{appl}
        \int_{\Omega}\varphi \left(\frac{n+2}{n}u-c^2+\frac{(n-2)R}{2n(n-1)}u^2\right)=0.
    \end{equation}
    
 It is easy to see that since $\Delta u= -1$, the divergence theorem implies that $c|\partial\Omega|=|\Omega|$. Thus, our constraint on the scalar curvature implies that 
 
 $$R\leq -\frac{n(n-1)}{2c^2(n-2)}\frac{(n+2)^2}{n^2},$$ which is equivalent to the inequality $\frac{(n+2)^2}{n^2}+2c^2\frac{(n-2)R}{n(n-1)}\leq 0.$ In particular, $\frac{(n-2)R}{n(n-1)}<0.$
Now, let us consider the quadratic function 
$y=\frac{n+2}{n}u-c^2+\frac{(n-2)R}{2n(n-1)}u^2.$ 
Note that the discriminant of the quadratic function is non-positive and, therefore, 
$\frac{n+2}{n}u-c^2+\frac{(n-2)R}{2n(n-1)}u^2\leq 0,$ where the equality occurs in a closed set. From \eqref{appl}, we reach a contradiction.
    
\end{proof}

\section*{Acknowledgements}
The authors would like to thank Alberto Farina and Luciano Mari for their discussions about the object of this paper and several valuable suggestions.

The first author would like to thank the hospitality of the Mathematics Department of  Università degli Studi di Torino, where part of this work was carried out (he was supported by CNPq/Brazil Grant 200261/2022-3). The first and third authors have been partially supported by Conselho Nacional de Desenvolvimento Científico e Tecnológico (CNPq) of the Ministry of Science, Technology and Innovation of Brazil, Grants 316080/2021-7 and 306524/2022-8, and supported by Paraíba State Research Foundation(FAPESQ), Brazil, Grant 3025/2021. The second author is member of GNAMPA, Gruppo Nazionale per l’Analisi Matematica, la Probabilità e le loro Applicazioni of INdAM.



\begin{thebibliography}{99}

\bibitem{BC} C. Bianchini, and G. Ciraolo, \emph{Wulff shape characterizations in overdetermined anisotropic elliptic problems.} Commun.
Partial Differ. Equ. {\bf 43} (2018), 790–820.


\bibitem{BNST} B. Brandolini, C. Nitsch, P. Salani and C. Trombetti, \emph{Serrin type overdetermined problems: an alternative proof.} Arch.
Rat. Mech. Anal. {\bf 190} (2008), 267–280.



\bibitem{CGS} L. Caffarelli, N. Garofalo and F. Seg\`ala, \emph{A gradient bound for entire solutions of quasi-linear equations and its consequences.} Comm. Pure Appl. Math. {\bf 47} (1994), 1457--1473.

\bibitem{CS} A. Cianchi and P. Salani, \emph{Overdetermined anisotropic elliptic problems.} Math. Ann. {\bf 345} (2009), 859–881.

\bibitem{CH} M. Choulli and A. Henrot, \emph{Use of the domain derivative to prove symmetry results in partial differential equations.} Math. Nachr. {\bf 192} (1998), 91–103.

\bibitem{ciraolo} G. Ciraolo, L. Vezzoni, \emph{On Serrin’s overdetermined problem in space forms.} Manuscripta Math. {\bf 159} (2019), 445–452.

\bibitem{ruan} F. Chen, Q. Huang and Q. Ruan. \emph{The $p$-Laplacian overdetermined problem on Riemannian manifolds.} 
https://doi.org/10.48550/arXiv.2305.03492

\bibitem{delay} E. Delay, P. Sicbaldi, \emph{Extremal domains for the first eigenvalue in a general compact
Riemannian manifold.} Discrete and Continuous Dynamical Systems, Series A, {\bf 35} (2015)
5799-5825.
\bibitem{dv1} M. Domínguez-Vázquez, A. Enciso and D. Peralta-Salas. \emph{Solutions to the overdetermined boundary problem for semilinear equations with position-dependent nonlinearities.} Adv. Math. {\bf 351} (2019), 718–760.
\bibitem{dv2} M. Domínguez-Vázquez, A. Enciso and D. Peralta-Salas. \emph{Overdetermined boundary problems with non-
constant Dirichlet and Neumann data} Anal. PDE {\bf 16}, no. 9 (2023), 1989–2003.

\bibitem{fall} M. M. Fall and I. A. Minlend,  {\emph Serrin’s over-determined problems on Riemannian manifolds.} Adv. Calc. Var. \textbf{8 (4)}, (2015), 371-400. 

\bibitem{FJ}  M. M. Fall and and S. Jarohs, \emph{Overdetermined problems with fractional laplacian}. ESAIM Control Optim. Calc. Var. {\bf 21} (2015), no. 4, 924-938.

\bibitem{FK} A. Farina and B. Kawohl, \emph{Remarks on an overdetermined boundary value problem.} Calc. Var. Partial Differential Equations {\bf 31} (2008), 351-357.

\bibitem{farina-roncoroni} A. Farina and A. Roncoroni, \emph{Serrin’s type problems in warped product manifolds}. Communications in Contemporary Math. {\bf 24} (4) (2022), 2150020.


\bibitem{FV} A. Farina and E. Valdinoci, \emph{A pointwise gradient estimate in possibly unbounded domains with nonnegative
	mean curvature.} Adv. Math., {\bf 225} (2010), no. 5, 2808--2827.

\bibitem{FGK} I. Fragal\`a, F. Gazzola and B. Kawohl, \emph{Overdetermined problems with possibly degenerate ellipticity, a geometric approach.} Math. Z. {\bf 254} (2006), 117–132.

\bibitem{GL} N. Garofalo and J.L. Lewis, \emph{A symmetry result related to some overdetermined boundary value problems.} Amer. J. Math. {\bf 111} (1989), 9–33.

\bibitem{kumpraj} S. Kumaresan and J. Prajapat, \emph{Serrin’s result for hyperbolic space and sphere.} Duke Math. J. {\bf 91} (1998) 17–28.

\bibitem{poggesi} R. Magnanini and G. Poggesi, \emph{On the stability for Alexandrov’s Soap Bubble Theorem.} Jour. Anal. Math. {\bf 139} (2019),
179-205.

\bibitem{montiel} S. Montiel, \emph{Unicity of constant mean curvature hypersurfaces in some Riemannian manifolds.} Indiana Univ. Math. J. {\bf 48} (2) (1999), 711–748.

\bibitem{NT} C. Nitsch and C. Trombetti, \emph{The classical overdetermined Serrin problem.} Complex Variables and Elliptic Equations {\bf 63}, no.7--8 (2018), 665--676. 

\bibitem{pacard} F. Pacard and P. Sicbaldi, \emph{Extremal domains for the first eigenvalue of the Laplace-Beltrami operator.} Ann. Inst. Fourier {\bf 59} (2009), n.2, 515–542.

\bibitem{Payne} L. E. Payne, \emph{Some remarks on maximum principles.} J. Anal. Math. {\bf 30} (1976) 421-433.

\bibitem{reilly} R. C. Reilly, \emph{Applications of the Hessian operator in a Riemannian manifold}. Indiana Univ. Math. J. {\bf 26} (3) (1977), 459-472.

\bibitem{reilly2} R. C. Reilly, \emph{Mean curvature, the Laplacian, and soap bubbles.} Amer. Math. Monthly {\bf 89}
(1982), 180–188, 197–198.

\bibitem{Roncoroni} A. Roncoroni, \emph{A Serrin-type symmetry result on model manifolds: an extension of the Weinberger argument.} C. R. Math. Acad. Sci. Paris {\bf 356} (2018), no. 6, 648–656.

\bibitem{ros} A. Ros, \emph{Compact hypersurfaces with constant higher order mean curvatures.} Rev. Mat.
Iberoamericana {\bf 3} (1987), 447–453.

\bibitem{serrin} J. Serrin, \emph{A symmetry problem in potential theory}. Arch. Rational Mech. Anal. {\bf 43} (1971), 304–318.

\bibitem{SS} L. Silvestre and B. Sirakov, \emph{Overdetermined problems for fully nonlinear elliptic equations.} Calc. Var. {\bf 54} (2015), 989–1007.

\bibitem{Sperb} R. P. Sperb, \emph{Maximum Principles and Their Applications.} Mathematics in Science and Engineering, vol.
157, Academic Press Inc. [Harcourt Brace Jovanovich Publishers], New York, 1981.


\bibitem{struwe} M. Struwe, \emph{Variational Methods: Applications to Nonlinear Partial Differential Equations and Hamiltonian Systems.} Ergeb. Math. Grenzgeb. (3), Springer-
Verlag, Berlin–Heidelberg, 1990.

\bibitem{wein} H.F. Weinberger, \emph{Remark on the preceding paper of Serrin,} Arch. Rational Mech. Anal. {\bf 43} (1971), 319–320.
\end{thebibliography}
\end{document}